\begin{document}

\newtheorem{Theorem}{Theorem}
\newtheorem{Lemma}[Theorem]{Lemma}
\newtheorem{Proposition}[Theorem]{Proposition}
\newtheorem{Corollary}[Theorem]{Corollary}

\newtheorem{Definition}[Theorem]{Definition}
\newtheorem{Notation}[Theorem]{Notation}
\newtheorem{Example}[Theorem]{Example}
\newtheorem{Remark}[Theorem]{Remark}
\newtheorem{Remarks}[Theorem]{Remarks}

\newcommand\N{\mathbb{N}}
\newcommand\Z{\mathbb{Z}}
\newcommand\R{\mathbb{R}}

\title{A New Algorithm for Approximating the Least Concave Majorant}
\author{\| Martin | Franc\accent23u|, Prague,  \| Ron |Kerman|,  St. Catharines, \|  Gord |Sinnamon|,~London }

\abstract
The least concave majorant, $\hat F$, of a continuous function $F$ on a closed interval, $I$, 
is defined by 
\[
\hat F (x) = \inf \left\{ G(x): G \geq F, G \mbox{ concave}\right\},\; x \in I. 
\]
We present here an algorithm, in the spirit of the Jarvis March, to approximate the least concave majorant of a differentiable piecewise polynomial function of degree at most three on $I$. Given any function $F  \in \mathcal{C}^4(I)$, it can be well-approximated on $I$ by a clamped cubic spline $S$. We show that $\hat S$ is then a good approximation to $\hat F$.

We give two examples, one to illustrate, the other to apply our algorithm.

\endabstract

\keywords
   least concave majorant, level function, spline approximation
\endkeywords

\subjclass
%%%%%
%%%Mathematics Subject Classification 2010
%%%%%
26A51, 52A41, 46N10
\endsubjclass

\thanks 
The first-named author was supported by the grant SVV-2016-260335 and by the grant  P201/13/14743S of the Grant Agency of the Czech Republic. NSERC support is gratefully acknowledged.
\endthanks

\section{Introduction}

Suppose $F$ is  a continuous function on the interval $I = [a,b]$. Denote by $\hat F$ the least concave majorant of $F$, namely, 
\[
\hat F (x) = \inf \left\{ G(x) :  G \geq F, G \mbox{ concave }\right\}, 
\] 
which can be shown to be given by 
\[
\hat F (x) = \sup \left\{ \frac{\beta - x} {\beta - \alpha} F (\alpha )  + \frac{ x - \alpha }{\beta - \alpha} F (\beta) : a \leq \alpha \leq  x \leq \beta \leq b \right\} , \; x \in I. 
\]
This concave function has application in such diverse areas as Mathematical Economics, Statistics, and Abstract Interpolation Theory. 
See, for example, \cite{Deb1976}, \cite{Car2002}, \cite{Pee1970}, \cite{BruKru1991}, \cite{MasSin2006} and \cite{KerMilSin2007}.
We observe that $\hat F$ is continuous on $I$ and it is differentiable there when $F$ is.  

Our aim in this paper is to give a new algorithm to approximate $\hat F$, together with an estimate of the error entailed. If $F$ is a continuous or, stronger yet, a differentiable piecewise  polynomial of degree at most three, then so is $\hat F$. If not, then $F$ may be approximated by a clamped cubic spline and the least concave majorant of the approximating function is seen to be a good approximation to $\hat F$. To estimate the error in Theorem~\ref{Theorem53} below we use a known result for the approximation error involving such cubic splines from \cite{HalMey1976}, together with a new result on $(\hat F)'$, which in \cite[p.70]{Lor1953} and  \cite{Halperin1953} is denoted by $(F')^{\circ}$ and is referred to as the level function of $F'$ in the unweighted case.
See the aforementioned Theorem \ref{Theorem53}.
\par
The simple structure of $\hat F$ will be the basis of our algorithm. Since $F$ and $\hat F$ are continuous, the zero set, $Z_F$, of $\hat F - F$ is closed; of course, $\hat F = F$ on $Z_F$. The connected components of $Z_F^c$ are intervals open in the relative topology of $I$ on which $\hat F$ is a strict linear majorant of $F$; indeed, if, for definiteness, the component interval with endpoints $\alpha$ and $\beta$ is a  subset of the interior of $I$, then   
\begin{equation}\label{1}
\hat F (\alpha) =F(\alpha), \; \hat F (\beta) = F (\beta),
\end{equation}
\begin{equation}\label{2}
F(x) < \hat F(x) = F(\alpha) + (x - \alpha ) \frac{F(\beta) - F (\alpha)}{\beta - \alpha}, \, \alpha < x   < \beta,
\end{equation}
and, if $F$ is differentiable on $I$, 
\begin{equation}\label{3}
(\hat F)' (\alpha) = F'(\alpha) =  \frac{F(\beta) - F (\alpha)}{\beta - \alpha} = F'(\beta) = (\hat F)'(\beta).
\end{equation}

Our task is thus to find the component intervals of $Z_F^c$. This will be done using a refinement of the Jarvis March  algorithm; see \cite{Jar1973}.  To begin, we determine the set of points, $D$, at which $F$ attains its maximum value, $M$, and then take $C=[c_1,c_2]$ to be the smallest closed interval containing $D$. Of course, in many cases $D$ consists of one point and $c_1 = c_2$. 
\par
It turns out that $\hat F$ increases to $M$ on $[a,c_1]$, is identically equal to $M$ on $C$, then decreases on $[c_2, b]$.
\par
To describe in general terms how the algorithm works we focus on $[a,c_1]$, $a < c_1$, and take $F$ to be a differentiable function which is piecewise cubic. As such, there is a partition, $P$,  of $[a,c_1]$ on each subinterval of which $F$ is a cubic polynomial. By refining the partition, if necessary, to include critical points and points of inflection of $F$, we may assume that this polynomial is  either strictly concave, linear or strictly convex and is either increasing or decreasing on its subinterval. It is the subintervals  where the associated cubic polynomial is increasing and strictly concave that are of interest. It is important to point out that for a piecewise cubic function, $Z_F^c$ has only finitely many components. 
\par
Now, $\hat F$ on a component of $Z_F^c$ may be thought of as a kind of linear bridge over a convex part of $F$. With this in mind,  we call an interval, say $J = (\alpha, \beta)$, a bridge interval if,   on it, $F$ satisfies  
\begin{equation}\label{bridge2}
F(x) < F(\alpha) + (x - \alpha ) \frac{F(\beta) - F (\alpha)}{\beta - \alpha}, \, \alpha < x   < \beta,
\end{equation}
and
\begin{equation}\label{bridge3}
 F'(\alpha) =  \frac{F(\beta) - F (\alpha)}{\beta - \alpha} = F'(\beta).
\end{equation} 
We include endpoints of $I$ as possible endpoints of bridge intervals. In such case, the corresponding part of (\ref{bridge3}) is omitted. 
An illustrating example of bridge intervals and least concave majorant of a function can be found in figure~\ref{figure6}. It might be helpful to reader to check demonstrative Example 1. in section~\ref{section7} while reading the formal description of algorithm. The algorithm is there applied to a particular spline. 
\par
Proceeding systematically from $c_1$ to $a$ (the procedure from $c_2$ to $b$ is similar) our algorithm determines, in a finite number of steps, a finite number of pairwise disjoint bridge intervals with endpoints in the intervals of increasing strict concavity referred to in the above paragraph.  
The desired components are among these bridge intervals.

 The technical details of all this are elaborated in Section \ref{section2}. Proofs of results stated in that section are proved in the next one and the algorithm itself is justified in the one following that. 
 Remarks on the implementation of the procedure are made in Section \ref{section5}. Section \ref{section6} has estimates of the error incurred when approximating an absolutely continuous function by a clamped cubic spline, while in the final section two examples are given. 

 \section{The algorithm}\label{section2}
    
  In this section we describe  our algorithm in more detail. This will require us to first state some lemmas whose proof will be given in the next section.
  \par
 Suppose that $F$ a is continuous function on some interval $I = [a,b]$ and let $\hat F, Z_F^c, M, D$ and $C= [c_1, c_2]$ be as in the introduction. 
   
\begin{Lemma}\label{SeptemberLemma1}
If $F$ is a continuous function on $I$, then the least concave majorant, $\hat F$, of $F$ on $I = [a,b]$ is continuous on $I$, with $\hat F(a) = F(a)$ and $\hat F(b) = F(b)$. Moreover, on each component interval, $J$, of $Z_F^c$, with endpoints $\alpha$ and $\beta$, $\hat F$ is the linear function, $l$, interpolating $F$ at the points $\alpha$ and $\beta$. 
\end{Lemma}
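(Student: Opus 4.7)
My plan is to derive the three assertions in turn from the inf and sup characterizations of $\hat F$ given in the introduction. Concavity of $\hat F$ is immediate, since a pointwise infimum of concave functions is concave. Evaluating the sup formula at $x = a$, the constraint $a \le \alpha \le a \le \beta \le b$ forces $\alpha = a$, in which case the convex combination collapses to $F(a)$ for every admissible $\beta$; hence $\hat F(a) = F(a)$, and the boundary value at $b$ follows symmetrically. Continuity of $\hat F$ on the open interval $(a,b)$ is automatic from concavity.

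For continuity at the endpoints, one direction is easy: $\hat F \ge F$ together with continuity of $F$ gives $\liminf_{x \to a^+} \hat F(x) \ge F(a)$. For the matching upper bound I would use the sup formula and split the analysis by the size of $\beta' - \alpha'$: when $\beta' - \alpha'$ is small, both $F(\alpha')$ and $F(\beta')$ lie near $F(a)$ by continuity of $F$, and the expression in the sup is a convex combination of the two; when $\beta' - \alpha'$ is bounded below, the coefficient $(x - \alpha')/(\beta' - \alpha')$ of $F(\beta')$ tends to $0$ as $x \to a^+$ while $F(\alpha') \to F(a)$. Either way the sup is at most $F(a) + o(1)$, so $\hat F(x) \to F(a)$. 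Continuity of $\hat F$ in turn makes $Z_F$ closed.

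For the last assertion, fix a component $J = (\alpha, \beta)$ of $Z_F^c$. Since $Z_F$ is closed, $\alpha, \beta \in Z_F$, so $\hat F(\alpha) = F(\alpha)$ and $\hat F(\beta) = F(\beta)$; the inequality $\hat F \ge l$ on $[\alpha, \beta]$ then follows from concavity, as $l$ is the chord of $\hat F$ between these two points. The reverse inequality $\hat F \le l$ on $J$ is the crux of the lemma and the main obstacle. My plan is to fix $x \in J$ and observe that by compactness of the parameter region and continuity of the chord-value expression (extended by $F(x)$ on the diagonal $\alpha' = \beta' = x$), the supremum defining $\hat F(x)$ is attained at some pair $(\alpha^*, \beta^*)$. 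The diagonal case $\alpha^* = \beta^*$ would force $\hat F(x) = F(x)$, contradicting $x \in Z_F^c$, so $\alpha^* < x < \beta^*$.

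The decisive step is to show $\alpha^*, \beta^* \in Z_F$. Concavity of $\hat F$ yields $\tfrac{\beta^* - x}{\beta^* - \alpha^*} \hat F(\alpha^*) + \tfrac{x - \alpha^*}{\beta^* - \alpha^*} \hat F(\beta^*) \le \hat F(x)$, while the maximizer condition identifies $\hat F(x)$ with the analogous chord through $(\alpha^*, F(\alpha^*))$ and $(\beta^*, F(\beta^*))$ at $x$. Subtracting, and using $\hat F \ge F$ together with strict positivity of both barycentric weights, forces $\hat F(\alpha^*) = F(\alpha^*)$ and $\hat F(\beta^*) = F(\beta^*)$. Since $J$ is a component of $Z_F^c$ and these two points of $Z_F$ flank $x \in J$, necessarily $\alpha^* \le \alpha$ and $\beta^* \ge \beta$. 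Applying the sup formula at $\alpha$ and at $\beta$ with the same pair $(\alpha^*, \beta^*)$ shows that $F(\alpha)$ and $F(\beta)$ respectively dominate this chord at $\alpha$ and $\beta$. Since the chord through $(\alpha^*, F(\alpha^*))$ and $(\beta^*, F(\beta^*))$ and the chord $l$ are both affine on $[\alpha, \beta]$, and the former is at most the latter at both endpoints, the same inequality holds throughout, and in particular $\hat F(x) \le l(x)$.
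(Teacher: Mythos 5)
Your proof is correct, but it takes a genuinely different route from the paper's. You work entirely from the chord (supremum) characterization of $\hat F$ stated in the introduction: endpoint continuity comes from a direct two-case estimate on the supremum as $x \to a^+$, and the identity $\hat F = l$ on $J$ comes from showing the supremum at $x \in J$ is attained at a nondegenerate pair $(\alpha^*, \beta^*)$, that both maximizing endpoints must lie in $Z_F$ (hence outside $J$, so $\alpha^* \le \alpha$ and $\beta^* \ge \beta$), and that the optimal chord is therefore dominated by $l$ on $[\alpha,\beta]$. The paper instead manufactures explicit linear concave majorants: for continuity at $a$ it squeezes $\hat F$ between $F$ and a line through $(a, F(a)+\varepsilon)$ lying above the graph of $F$; for the component-interval claim it takes a point $y$ maximizing $F - l$ on $I$, observes that the shifted chord $l + (F(y)-l(y))$ is a concave majorant forcing $\hat F(y) = F(y)$ and hence $y \notin J^\circ$, deduces from concavity that $F \le l$ on all of $I$, and concludes $\hat F \le l$ because $l$ itself is then a concave majorant. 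Your approach is self-contained modulo the sup formula (which the paper asserts without proof, but which is standard) and yields the extra information that the maximizing chord has endpoints in $Z_F$; the paper's is shorter and avoids the compactness/attainment step, though it quietly relies on the existence of a near-supporting line at $a$, which itself deserves a word of justification. All the delicate steps in your outline check out: the chord-value function extended by $F(x)$ on the diagonal is continuous on the compact parameter triangle; the maximizer cannot be degenerate because $\hat F(x) > F(x)$ on $J$; the nonnegative quantities $\hat F(\alpha^*)-F(\alpha^*)$ and $\hat F(\beta^*)-F(\beta^*)$, having a strictly positive convex combination that is $\le 0$, must both vanish; and two affine functions ordered at $\alpha$ and at $\beta$ are ordered throughout $[\alpha,\beta]$.
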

\begin{Lemma}\label{SeptemberLemma2}
Suppose $F$ is differentiable on $(a,b)$ and $(\alpha,\beta)$ is a component of $Z_F^c$. Then $\hat F$ is differentiable on $(a,b)$, $(\hat F)'(x)=F'(x)$ for $x\in (a,b)\cap Z_F$, and $(\hat F)'(x)=\frac{F(\beta)-F(\alpha)}{\beta-\alpha}$ for $x\in [\alpha,\beta]$. In particular, 
$
F'(x)=(\hat F)'(x)=\frac{F(\beta)-F(\alpha)}{\beta-\alpha}
$
if $x=\alpha\in (a,b)$ or $x=\beta\in (a,b)$. Moreover, if $F'$ is continuous on $(a,b)$, then so is $(\hat F)'$.  

\end{Lemma}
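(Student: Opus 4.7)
The plan is to combine Lemma~\ref{SeptemberLemma1} --- which pins down $\hat F$ on each component of $Z_F^c$ as the linear interpolant of $F$ at the endpoints --- with the global concavity of $\hat F$ to propagate differentiability information to the rest of $(a,b)$. First, on any component $(\alpha,\beta)$ of $Z_F^c$, Lemma~\ref{SeptemberLemma1} immediately gives
\[
\hat F(x) = F(\alpha) + (x-\alpha)\,\frac{F(\beta)-F(\alpha)}{\beta-\alpha},
\]
so $(\hat F)'$ equals the prescribed slope throughout the open interval; extending the identity to all of $[\alpha,\beta]$ will follow once differentiability at the endpoints is in place.

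The key step will be to show that $\hat F$ is differentiable at every $x \in (a,b)\cap Z_F$ with $(\hat F)'(x) = F'(x)$. For this I would use a sandwich argument based on $\hat F \geq F$ and $\hat F(x) = F(x)$: the inequality
\[
\frac{\hat F(x+h)-\hat F(x)}{h} \geq \frac{F(x+h)-F(x)}{h} \qquad (h>0),
\]
with the reverse for $h<0$, yields $(\hat F)'_+(x) \geq F'(x) \geq (\hat F)'_-(x)$. Concavity of $\hat F$ forces $(\hat F)'_-(x) \geq (\hat F)'_+(x)$, so all four quantities coincide and $\hat F$ is differentiable at $x$ with derivative $F'(x)$. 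Because $Z_F$ is closed, the endpoints $\alpha,\beta$ of any component of $Z_F^c$ lie in $Z_F$, so this case covers them whenever they lie in $(a,b)$; matching the one-sided derivative from the linear piece with $F'(\alpha)$ (respectively $F'(\beta)$) then yields the ``in particular'' identity $F'(\alpha) = \frac{F(\beta)-F(\alpha)}{\beta-\alpha} = F'(\beta)$ and simultaneously extends $(\hat F)'(x) = \frac{F(\beta)-F(\alpha)}{\beta-\alpha}$ to all of $[\alpha,\beta]$.

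For the final continuity assertion I would invoke the standard fact that a concave function which is differentiable on an open interval automatically has a continuous derivative there: the one-sided derivatives $(\hat F)'_\pm$ are monotone with $(\hat F)'_+$ right-continuous and $(\hat F)'_-$ left-continuous, and pointwise equality of the two forces full continuity. Since $\hat F$ is concave and, by the previous paragraph, differentiable at every point of $(a,b)$, continuity of $(\hat F)'$ follows. The main obstacle is really bookkeeping in the sandwich step --- keeping the direction of the inequalities straight when $h$ changes sign and remembering to use $(\hat F)'_-(x) \geq (\hat F)'_+(x)$ (the concave, not convex, version) to close the argument.
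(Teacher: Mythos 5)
Your proof is correct. For the core differentiability claim your sandwich argument at points of $(a,b)\cap Z_F$ is essentially the paper's: the authors write the same information as a single chain of difference quotients,
\[
\frac{F(y)-F(x)}{y-x}\le\frac{\hat F(y)-\hat F(x)}{y-x}\le\frac{\hat F(x)-\hat F(w)}{x-w}\le\frac{F(x)-F(w)}{x-w},
\]
and apply the squeeze theorem, which is your one-sided-derivative bookkeeping in compressed form; the treatment of the endpoints $\alpha,\beta$ via Lemma~\ref{SeptemberLemma1} is likewise the same. Where you genuinely diverge is the final continuity assertion. The paper assumes $F'$ is continuous at $x\in(a,b)\cap Z_F$ and argues that for $w<x<y$ every component of $Z_F^c$ meeting $(w,y)$ has an endpoint in $(w,y)$, whence $(\hat F)'\bigl((w,y)\bigr)\subset F'\bigl((w,y)\bigr)$ and continuity of $(\hat F)'$ at $x$ follows from that of $F'$. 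You instead invoke the standard fact that a concave function differentiable on an open interval automatically has a continuous derivative. Both routes are valid; yours is shorter and in fact yields the stronger conclusion that $(\hat F)'$ is continuous on $(a,b)$ whenever $F$ is merely differentiable there, so the continuity hypothesis on $F'$ in the last sentence of the lemma becomes superfluous. The paper's route has the minor advantage of producing the explicit localization $(\hat F)'\bigl((w,y)\bigr)\subset F'\bigl((w,y)\bigr)$, which describes the values of $(\hat F)'$ directly in terms of those of $F'$.
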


  \begin{Lemma}\label{OldLemma1} 
 Let $F$ be a continuous function on $I$, then  $\hat F \equiv M$ on $C$. Moreover, $\hat F$ is strictly increasing on $(a,c_1)$ and strictly decreasing on $(c_2, b)$.
  \end{Lemma}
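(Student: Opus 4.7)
The plan is to prove $\hat F \equiv M$ on $C$ first, then deduce strict monotonicity on $(a,c_1)$, the argument on $(c_2,b)$ being symmetric.

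For the equality on $C$, I would combine two opposite bounds. Since the constant function $M$ is a concave majorant of $F$, the definition of $\hat F$ gives $\hat F \le M$ on $I$. Continuity of $F$ makes $D=F^{-1}(\{M\})$ closed, so $c_1=\inf D$ and $c_2=\sup D$ lie in $D$; together with $\hat F\ge F$ this forces $\hat F(c_1)=\hat F(c_2)=M$. The concavity of $\hat F$ on $[c_1,c_2]$ traps its graph above the chord joining $(c_1,M)$ and $(c_2,M)$ — which is constantly $M$ — and below the global bound $M$, so $\hat F\equiv M$ on $C$.

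For the monotonicity on $(a,c_1)$, the key substep is the sharp pointwise bound $\hat F(x)<M$ for every $x\in[a,c_1)$, which I would extract from the supremum representation in the introduction. For any admissible pair $a\le\alpha\le x\le\beta\le b$ with $x<c_1$, one has $\alpha\le x<c_1$, so $\alpha\notin D$ and hence $F(\alpha)<M$ strictly, while $F(\beta)\le M$. When $\alpha<\beta$, the corresponding convex combination
\[
g(\alpha,\beta):=\frac{\beta-x}{\beta-\alpha}F(\alpha)+\frac{x-\alpha}{\beta-\alpha}F(\beta)
\]
is strictly less than $M$ because the coefficient $(\beta-x)/(\beta-\alpha)$ is positive; on the edges $\alpha=x$ or $\beta=x$ of the parameter rectangle, $g$ extends by continuity to the value $F(x)<M$. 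As $g$ is thus continuous on the compact rectangle $[a,x]\times[x,b]$ with every value strictly below $M$, its supremum is attained and so is itself strictly less than $M$, giving $\hat F(x)<M$. The concavity of $\hat F$ on $[x_1,c_1]$, for $a<x_1<x_2<c_1$, then yields
\[
\hat F(x_2)\ge \frac{c_1-x_2}{c_1-x_1}\hat F(x_1)+\frac{x_2-x_1}{c_1-x_1}M,
\]
and since $M>\hat F(x_1)$ the right-hand side strictly exceeds $\hat F(x_1)$.

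I expect the main obstacle to be the compactness-and-continuity step in the middle paragraph: the pointwise strict inequality $g<M$ does not by itself imply $\sup g<M$, so one must carefully verify that $g$ extends continuously across the degenerate part of the parameter rectangle, where the chord formula becomes $0/0$, before compactness can be invoked. Once that is handled, everything else is a short concavity argument.
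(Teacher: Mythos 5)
Your proof is correct, and its overall skeleton (constant majorant $M$ plus concavity of $\hat F$ for the first claim; concavity through the point $(c_1,M)$ for the second) matches the paper's. The one genuine difference is in the key substep: the paper's proof of strict monotonicity simply asserts $\hat F(x_1)<\hat F(c_1)$ for $x_1\in(a,c_1)$ and derives the contradiction from concavity, leaving the strict inequality $\hat F(x)<M$ on $[a,c_1)$ unjustified, whereas you prove it explicitly from the supremum representation via continuity of the chord function $g$ on the compact parameter rectangle. That compactness argument is sound — the only degenerate point is the corner $\alpha=\beta=x$, where $g$ extends continuously to $F(x)<M$ (on the edges $\alpha=x$ or $\beta=x$ with $\alpha<\beta$ the formula already evaluates to $F(x)$, so no extension is needed there), and a continuous function strictly below $M$ on a compact set has supremum strictly below $M$. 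So your write-up supplies a detail the published proof takes for granted; the rest is the same concavity manipulation, with your direct form of the inequality replacing the paper's proof by contradiction.
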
  
  \begin{Lemma}\label{observation}
  Let $F$ be  a continuous function, suppose $C=[c_1,c_2]$ is as in the introduction, and suppose $x,y,z \in (a,b)$ such that $F$ is strictly convex on $(x,z)$ and  $y \in (x,z)$. Then  $F(y) \neq \hat  F (y)$. 
  
  Suppose $F$ is differentiable as well. If $y \in (a, c_1)$ and $F'(y) \leq 0 $ then $F(y) \neq \hat F(y)$. Analogously, if $y \in (c_2,b)$ and $F'(y) \geq 0$ then $F(y) \neq \hat  F(y)$.  
  \end{Lemma}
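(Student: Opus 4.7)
The plan is to split the statement into its two parts and handle each by a short contradiction argument based on the lemmas already established.

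For the convexity part, I would observe that if $F$ is strictly convex on $(x,z)$ and $y \in (x,z)$, then writing $y = \lambda x + (1-\lambda)z$ with $\lambda = (z-y)/(z-x) \in (0,1)$, strict convexity gives
\[
F(y) < \tfrac{z-y}{z-x}F(x) + \tfrac{y-x}{z-x}F(z).
\]
The right-hand side is the value at $y$ of the chord joining $(x, F(x))$ and $(z, F(z))$, and the variational characterisation of $\hat F$ recalled in the introduction (with $\alpha = x$, $\beta = z$, both lying in $I$) bounds $\hat F(y)$ from below by exactly this quantity. Hence $\hat F(y) > F(y)$, which is the first assertion.

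For the derivative part, assume $y \in (a, c_1)$ with $F'(y) \leq 0$ and suppose for contradiction that $F(y) = \hat F(y)$, i.e.\ $y \in Z_F$. Lemma \ref{SeptemberLemma2} then yields $(\hat F)'(y) = F'(y) \leq 0$. Since $\hat F$ is concave on $I$, the derivative $(\hat F)'$ is nonincreasing on $(a,b)$, so $(\hat F)'(w) \leq (\hat F)'(y) \leq 0$ for every $w \in [y, c_1)$. Integrating forces $\hat F$ to be nonincreasing on $[y, c_1)$, which directly contradicts Lemma \ref{OldLemma1}, asserting that $\hat F$ is strictly increasing on $(a, c_1)$. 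The case $y \in (c_2, b)$ with $F'(y) \geq 0$ is entirely symmetric, using strict decrease of $\hat F$ on $(c_2, b)$ and nondecrease of $-(\hat F)'$.

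The only point requiring a little care is the applicability of Lemma \ref{SeptemberLemma2}: if $Z_F = I$ then $\hat F \equiv F$ and the identity $(\hat F)'(y) = F'(y)$ is immediate, while otherwise $Z_F^c$ has at least one component and Lemma \ref{SeptemberLemma2} applies directly. No calculations beyond those above are needed, and neither part requires any information about the global structure of $Z_F^c$ beyond the two ingredients already supplied by Lemmas \ref{SeptemberLemma2} and \ref{OldLemma1}.
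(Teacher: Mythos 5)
Your proof is correct and follows essentially the same route as the paper: the convexity part rests on comparing $F(y)$ with the chord through $(x,F(x))$ and $(z,F(z))$ (you phrase it directly via the supremum formula for $\hat F$, the paper as a contradiction with concavity of $\hat F$), and the derivative part combines Lemma \ref{SeptemberLemma2} with the strict monotonicity from Lemma \ref{OldLemma1}. Your version actually spells out the step the paper leaves implicit, namely that $(\hat F)'(y)\leq 0$ together with concavity forces $\hat F$ to be nonincreasing on $[y,c_1)$.
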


  \begin{Lemma}\label{OldLemma2}
 Let $F$ be a continuous function.If $J = (\alpha, \beta)$ is a component interval of $Z_F^c$ then either $J \subset (a,c_1)$, $J \subset (c_1, c_2)$ or $J \subset (c_2, b)$. 
  \end{Lemma}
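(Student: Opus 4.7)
The plan is to argue by contradiction. Suppose $J=(\alpha,\beta)$ is a component of $Z_F^c$ that is not contained in any of $(a,c_1)$, $(c_1,c_2)$, or $(c_2,b)$. Lemma~\ref{SeptemberLemma1} gives $\hat F(a)=F(a)$ and $\hat F(b)=F(b)$, so $a,b\in Z_F$ and hence $J\subset(a,b)$. A connected open subinterval of $(a,b)$ that avoids both $c_1$ and $c_2$ must lie in one of the three listed intervals; since we assumed the contrary, $J$ must contain $c_1$ or $c_2$.

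Next I would dispose of the case $c_1\in J$; the case $c_2\in J$ is symmetric. Because $F$ is continuous on the compact interval $I$, the set $D$ on which $F$ attains its maximum value $M$ is a nonempty closed (hence compact) subset of $I$ and therefore contains its infimum. Since $C=[c_1,c_2]$ is by definition the smallest closed interval containing $D$, we have $c_1=\inf D\in D$, so $F(c_1)=M$. On the other hand Lemma~\ref{OldLemma1} yields $\hat F(c_1)=M$ because $c_1\in C$. Hence $(\hat F-F)(c_1)=0$, i.e., $c_1\in Z_F$, contradicting $c_1\in J\subset Z_F^c$.

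The entire argument rests on the two facts $c_1,c_2\in D$ and Lemma~\ref{OldLemma1}, and I expect no serious obstacle. The only point at which one could slip is the topological reduction in the first paragraph: it is essential to record that $a$ and $b$ belong to $Z_F$ (so that $J$ is genuinely an open subinterval of $(a,b)$), otherwise the partition of $(a,b)\setminus\{c_1,c_2\}$ into the three claimed components would not be available.
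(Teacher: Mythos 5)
Your proof is correct, and it reaches the conclusion by a genuinely different (though equally elementary) route than the paper. You reduce everything to the single fact that $c_1,c_2\in Z_F$, which you obtain from the closedness and nonemptiness of $D$: the smallest closed interval containing $D$ is $[\inf D,\sup D]$, so $c_1,c_2\in D$, giving $F(c_1)=F(c_2)=M=\hat F(c_1)=\hat F(c_2)$; then no open component of $Z_F^c$ can contain $c_1$ or $c_2$, and connectedness does the rest. The paper argues instead about a component $J=(\alpha,\beta)$ that meets $C$: on $J$ the majorant $\hat F$ is the linear interpolant of $F$ at $\alpha$ and $\beta$ (Lemma~\ref{SeptemberLemma1}), it equals $M$ at any point of $J\cap C$ (Lemma~\ref{OldLemma1}), and a strict convex combination of two values $\le M$ equals $M$ only if both values equal $M$; hence $\alpha,\beta\in D\subset C$ and $J\subset(c_1,c_2)$, while a component disjoint from $C$ lands in $(a,c_1)$ or $(c_2,b)$. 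What each approach buys: yours is shorter and isolates the reusable fact $c_1,c_2\in Z_F$ (which the paper needs again later, e.g.\ when it asserts $\hat F(c_1)=F(c_1)$ in Section~\ref{section2}, where it is justified by citing this very lemma); the paper's version does not rely on $c_1,c_2\in D$ and yields the stronger by-product that any component lying inside $C$ has $F=M$ at both of its endpoints. Your closing caution about recording $a,b\in Z_F$ before partitioning $(a,b)\setminus\{c_1,c_2\}$ is well placed, and your argument also survives the degenerate cases $c_1=a$, $c_2=b$, or $c_1=c_2$ without modification.
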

 Suppose that $F$ is piecewise cubic and differentiable on $I$, and suppose $J \subset [a,c_1]$.  
 Denote by $P$ the closed intervals determined by the partition of $[a,c_1]$ inherited from the piecewise cubic structure of $F$,  together with any critical points and points of inflection of $F$ in $[a,c_1]$.
  \par
  \begin{Lemma}\label{OldLemma3}
Suppose that $F$ is piecewise cubic and differentiable on $I$. Let $J = (\alpha, \beta) \subset [a,c_1)$ be a component interval of $Z_F^c$. Then, either $\alpha = a$ or there is an interval $K = [k_1, k_2]$ in $P$ containing $\alpha$ on which $F$ is strictly concave and increasing. Similarly, either $\beta = c_1$ or there is an interval $L = [l_1, l_2]$ in $P$ containing $\beta$  on which $F$ is strictly concave and increasing. Moreover, $K \neq L$.  
  \end{Lemma}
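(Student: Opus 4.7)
The plan is to verify the three claims in turn: the description of $K$ at $\alpha$ (when $\alpha \neq a$), the symmetric statement for $L$ at $\beta$ (when $\beta \neq c_1$), and the distinctness $K \neq L$. The key geometric observation driving everything is that the bridge line over $(\alpha,\beta)$ is exactly the tangent line to $F$ at each endpoint, by (\ref{3}) and Lemma~\ref{SeptemberLemma2}.

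Assume $\alpha \neq a$, so $\alpha \in (a, c_1)$. Lemma~\ref{SeptemberLemma2} yields $F'(\alpha) = (F(\beta) - F(\alpha))/(\beta - \alpha)$, and since $\alpha \in (a, c_1)$ with $F(\alpha) = \hat F(\alpha)$, the second part of Lemma~\ref{observation} forces $F'(\alpha) > 0$. Let $K = [k_1, k_2] \in P$ be the piece with $k_1 = \alpha$ if $\alpha$ is a partition point, or the unique piece containing $\alpha$ in its interior otherwise. Because critical points of $F$ belong to $P$, the cubic $F'$ has no interior zero on $K$; together with $F'(\alpha) > 0$ and continuity of $F'$, this makes $F$ strictly increasing on $K$. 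On $K$, the cubic $F$ is either strictly concave, linear, or strictly convex (as noted in the introduction). Pick any $x$ with $\alpha < x < \min(k_2, \beta)$, so $x \in K \cap (\alpha, \beta)$. Then by (\ref{2}),
\[
\hat F(x) = F(\alpha) + F'(\alpha)(x - \alpha),
\]
which is the tangent line to $F$ at $\alpha$. If $F$ were strictly convex on $K$, the standard fact that a strictly convex differentiable function lies strictly above its tangents would give $F(x) > \hat F(x)$, contradicting $\hat F \geq F$. If $F$ were linear on $K$, then $F$ would coincide with its tangent, giving $F(x) = \hat F(x)$, contradicting the strict inequality in (\ref{2}). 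Hence $F$ is strictly concave on $K$.

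The argument for $\beta$ (when $\beta \neq c_1$) is symmetric: choose $L = [l_1, l_2] \in P$ with $l_2 = \beta$, or the piece containing $\beta$ in its interior, and repeat the tangent-line argument at $\beta$, using that Lemma~\ref{SeptemberLemma2} and (\ref{3}) make the bridge line the tangent to $F$ at $\beta$ as well. For the final assertion, suppose $K = L$. Then $\alpha$ and $\beta$ both lie in an interval on which $F$ is strictly concave, so $F'$ is strictly decreasing on $K$, giving $F'(\alpha) > F'(\beta)$. This contradicts Lemma~\ref{SeptemberLemma2}, which supplies $F'(\alpha) = (F(\beta) - F(\alpha))/(\beta - \alpha) = F'(\beta)$.

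The main obstacle is the concavity step: ruling out the linear and strictly convex cases on $K$ (and $L$) hinges on recognizing the bridge line as the tangent to $F$ at the endpoint, which is where Lemma~\ref{SeptemberLemma2} does the essential work. Every other step is routine bookkeeping with the partition $P$.
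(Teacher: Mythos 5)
Your proof is correct and follows essentially the same route as the paper's: the tangent-line argument (the bridge line over $J$ is the tangent at $\alpha$, and $F$ lies strictly below it near $\alpha$) rules out the linear and strictly convex cases, while monotonicity on pieces of $P$ plus $F'(\alpha)>0$ gives increasingness. You in fact go slightly further than the paper's written proof by explicitly verifying $K\neq L$ via $F'(\alpha)=F'(\beta)$ against the strict decrease of $F'$ on a single concave piece, a step the paper leaves implicit.
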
 
  \par
  Leaving aside the case $c_1 = c_2 = b$ our goal is to select the components of $Z_F^c$ from among the bridge intervals of the form $[a,b_1)$ or $(a_1,b_1)$, $a_1 > a$, such that $a_1$ and $b_1$ lie in distinct intervals in $\mathcal{P}$ with disjoint interiors  on which intervals $F$ is strictly concave and increasing.
  \par
    Let $\mathcal{P}$ be the collection of intervals in $P$ where $F$ is strictly concave and increasing.
    \par
    Given a pair of intervals in $\mathcal{P}$ that could have the endpoints of a bridge interval in them, one determines those endpoints, if they exist, by the study of a certain sextic polynomial equation. The details of the most complicated case are described in the following lemma.
    
\begin{Lemma}\label{OldLemma4}    
Let $L = [l_1, l_2]$ and $R= [r_1, r_2]$ belong to $\mathcal{P}$ with $l_2 \leq r_1$. Suppose 
\[
F(x) = \begin{cases}
P_L (x) &= A x^3 + B x^2 + C x + D \mbox{ on } L\\
P_R (x) &= W x^3 + X x^2 + Y x + Z \mbox{ on } R,\\
\end{cases}
\] 
with $AW \neq 0$. Assume 
\[
J = P_L' (L) \cap P_R' (R) \neq \emptyset.
\]
Then, if there is a bridge interval $I_1 = (a_1, b_1)$ with $a_1 \in L$ and $b_1 \in R$, this bridge interval is such that 
\begin{equation}
a_1 = (P_L')^{-1}(y_0) \mbox{ and } b_1 =  (P_R')^{-1}(y_0), 
\end{equation}
where $y_0$ is a point in $J$ satisfying the sextic equation
\[
(\mu^2_1 - \mu^2_2 -  \mu^2_3 \delta )^2  -  4 \mu_2^2 \mu^2_3 \gamma \delta = 0,
\]
in which
\[
\gamma =  3 A y + B^2 - 3 AC ,\; \delta = 3 W y + X^2 - 3 W Y, \; \mu_2 = \frac{- 2 \gamma} {27 A^2 }, \mu_3 = \frac{2 \delta} {27 W^2  } 
\]
and 
\[
\mu_1 = \frac{1}{3} \left( \frac{X}{W} - \frac{B}{A} \right)y + \left( Z + \frac{2 X^3}{27 W^2}  - \frac{Y X}{3 W} \right)  - \left( D + \frac{2 B^3}{27A^2} - \frac{BC}{3A} \right).
\]
\end{Lemma}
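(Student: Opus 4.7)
The plan is to translate the bridge conditions into two equations on a common slope variable $y_0$, invert the (strictly monotone) derivatives to read the endpoints off from $y_0$, and then eliminate the two nested square roots by repeated squaring to produce a polynomial identity of degree six.

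First, assume a bridge interval $(a_1,b_1)$ exists with $a_1\in L$, $b_1\in R$. Condition (\ref{bridge3}) forces $P_L'(a_1)=P_R'(b_1)$; call this common slope $y_0$, and note $y_0\in P_L'(L)\cap P_R'(R)=J$ automatically. Since $P_L$ is strictly concave on $L$, the derivative $P_L'\colon L\to P_L'(L)$ is strictly decreasing and hence invertible; the same holds for $P_R'$. This yields $a_1=(P_L')^{-1}(y_0)$ and $b_1=(P_R')^{-1}(y_0)$, the first displayed assertion. The remaining content of (\ref{bridge3}), namely the slope identity $y_0(b_1-a_1)=P_R(b_1)-P_L(a_1)$, rearranges to $P_L(a_1)-y_0a_1=P_R(b_1)-y_0b_1$, which must now be recast as an equation in $y_0$ alone.

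Second, depress both cubics (legitimate because $AW\neq 0$). Setting $t_L=a_1+B/(3A)$, $p_L=C-B^2/(3A)$, and $q_L=D-BC/(3A)+2B^3/(27A^2)$, we have $P_L(a_1)=At_L^3+p_L t_L+q_L$, and the relation $P_L'(a_1)=y_0$ becomes $3At_L^2=y_0-p_L$, giving $t_L^2=\gamma/(9A^2)$ and $t_L=\epsilon_L\sqrt{\gamma}/(3A)$ for some $\epsilon_L\in\{-1,+1\}$. A short substitution using the formula for $t_L^2$ then produces
\begin{equation*}
P_L(a_1)-y_0a_1=-2At_L^3+q_L+\frac{By_0}{3A}=\epsilon_L\mu_2\sqrt{\gamma}+q_L+\frac{By_0}{3A}.
\end{equation*}
The analogous computation for $P_R$, adjusted for the sign convention $\mu_3=+2\delta/(27W^2)$, yields
\begin{equation*}
P_R(b_1)-y_0b_1=-\epsilon_R\mu_3\sqrt{\delta}+q_R+\frac{Xy_0}{3W},
\end{equation*}
where $q_R=Z-XY/(3W)+2X^3/(27W^2)$.

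Equating the two sides and collecting the $q_R-q_L$ and affine-in-$y_0$ pieces reproduces exactly the $\mu_1$ of the statement, so the bridge equation reduces to $\mu_1=\epsilon_L\mu_2\sqrt{\gamma}+\epsilon_R\mu_3\sqrt{\delta}$. Squaring once yields $\mu_1^2-\mu_2^2\gamma-\mu_3^2\delta=2\epsilon_L\epsilon_R\mu_2\mu_3\sqrt{\gamma\delta}$, and squaring a second time removes both the surviving sign and the last radical, leaving $(\mu_1^2-\mu_2^2\gamma-\mu_3^2\delta)^2-4\mu_2^2\mu_3^2\gamma\delta=0$, a sextic in $y_0$. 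The main obstacle is the calculation in the depression step, collapsing $-2At_L^3$ into $\epsilon_L\mu_2\sqrt{\gamma}$ via $t_L^2=\gamma/(9A^2)$; this is purely mechanical but must be executed carefully, and the hypothesis $AW\neq 0$ is used precisely here to legitimate the depression and the divisions by $3A$ and $3W$. Because repeated squaring is non-reversible, the sextic is only a necessary condition on $y_0$: extraneous roots must subsequently be screened against membership in $J$ and against the original signed equation.
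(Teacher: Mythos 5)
Your proof is correct and follows essentially the same route as the paper's: reduce the bridge condition $P_L(a_1)-y_0a_1=P_R(b_1)-y_0b_1$ to an equation of the form $\mu_1=\pm\mu_2\sqrt{\gamma}\pm\mu_3\sqrt{\delta}$ with $\gamma,\delta$ linear in $y$, then square twice to obtain the sextic. In fact your depressed-cubic computation supplies the algebra the paper only asserts (that its equation (5) is equivalent to (6)), and your explicit tracking of the branch signs $\epsilon_L,\epsilon_R$ and the remark that the sextic is merely a necessary condition are both consistent with, and slightly more careful than, the published argument.
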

 The verification that a given interval $J  = (\alpha,\beta)\subset (a,c_1)$ satisfies condition (\ref{bridge2}) can be achieved using the following criterion: 
Assume that $\alpha \in L = [l_1, l_2] \in \mathcal{P}$, $\beta \in R = [r_1, r_2] \in \mathcal{P}$, $l_2 < r_1$,  and that $l$ is a linear function interpolating $F$ on $J$. 
Then $J$ satisfies (\ref{bridge2}) if, for every $K = [k_1,k_2]$ in $P$, with $K \subset [l_2 ,r_1]$,
\[
l(k_1) - F(k_1) > 0 \mbox{ and } l(k_2) - F(k_2)>0,
\]     
and, in addition, if $K \in \mathcal{P}$, then
\[
l(\varrho) - F(\varrho) >0
\] 
for any root, $\varrho$, in $K$ of the quadratic    
\[
F'(x)  = \frac{F(\beta) - F(\alpha)}{\beta - \alpha}.
\]
Obvious modifications of the above must also hold for $[a_1, l_2]$ and $[r_1,b_2]$. This criterion can be proved using elementary calculus.

We are now able to describe an iterative procedure that selects the component intervals of $Z_F^c$ from a class of bridge intervals. We will focus our description on the case of finding all component intervals contained in $(a,c_1)$ as the case in which the component intervals are contained in $(c_2, b)$ is analogous while the component intervals in $(c_1, c_2)$ are determined trivially by Lemma~\ref{OldLemma1} . 

If $a = c_1$, then there is no such component interval. In the following, we exclude, at first, the  case $c_1 = c_2 = b$, so that $c_1 < b$. 
Set $\mathcal{P}_0 = \mathcal{P}$. 

 We claim that $\mathcal{P}_0$ cannot be empty. As a consequence of Lemma~\ref{OldLemma2}  we have that $\hat F (c_1) = F (c_1)$, since $c_1$ cannot be in interior of any component interval. The point $c_1$ is a local maximum of $F$. The choice of $P$ ensures that there is  
an interval $(x,c_1)$ such that $F$ is increasing and concave on it, hence $\mathcal{P}_0$ must contain at least one interval. 

Assume $\mathcal{P}_0$ has exactly one interval.  The fact that $c_1$ is a local maximum of $F$ ensures that this interval is of a form $[x,c_1)$.  Suppose now that $x = a$ then $F= \hat F$ on $[a,c_1]$, since the function
\[
m(t) = \begin{cases}
F(t), \; t \in [a,c_1], \\
M, t \in (c_1, b],
\end{cases}
\]
is a concave majorant of $F$.  (It is a concave function extended linearly  with slope that of the tangent line at the endpoint.) 
 
 Suppose now that $x \neq a$.  We have $F \neq \hat F$ on $(a,x)$ - if there were $y \in (a,x)$ such that $F(y) = \hat F(y)$, then $F$ would have to be increasing and strictly concave on some neighbourhood by Lemma~\ref{observation}  and Lemma~\ref{OldLemma3}. This is a contradiction to the assumption that $[x,c_1]$ is the only interval in $\mathcal{P}_0$. Since $F \neq \hat F$ on $(a,x)$ there must be a component interval containing $(a,x)$. On the other hand, Lemma~\ref{OldLemma1}  implies that $F(c_1) = \hat F(c_1)$, hence this component interval must be a subset of $(a,c_1)$. 

The desired component interval is of a form $(a, \beta)$, $\beta \in [x, c_1)$. If we choose $\beta$ to be the unique solution to the equation
\[
F'(\beta) = \frac{F(\beta) - F(a)}{\beta -a},
\]
then the interval $(a,\beta)$ will be the component interval, since it is the only interval which satisfies the necessary conditions (\ref{3}). 
\par
Suppose next that $\mathcal{P}_0$ has at least two intervals and take $R = [r_1 ,r_2]$ to be that interval in $\mathcal{P}_0$  closest to $c_1$. 
\par 
We seek first  a component interval of the form $(a,r)$, $r \in R$, as if $R$ were the only interval in $\mathcal{P}_0$. If no such interval exists, let $L = [l_1, l_2]$ be the interval in $\mathcal{P}_0$ closest to $a$, then use  Lemma \ref{OldLemma4} to test for a bridge interval $W = (w_1, w_2)$ with $w_1 \in L$ and $w_2 \in R$. 

It is important to point out that Lemma~\ref{OldLemma4} only places a restriction on bridge intervals, it does not guarantee them. Once the sextic is solved, condition (\ref{2}) must still be verified for the proposed bridge interval. This means iterating through each partition subinterval contained in the proposed bridge interval and solving a maximum problem to verify that $F$ lies underneath the proposed linear $\hat F$. 

In a true Jarvis March points, rather than intervals, are ordered according to the angle of a tangent line. In the case of intervals associated to piecewise {\it cubic} functions such an ordering is computationally expensive. 

 Should there be no such $W$ carry out the same test  on the interval in $\mathcal{P}_0$ closest to the right of $L$, if one exists.   
\par 
If, in moving systematically to the right in this way, we find  no $W$, we discard $R$ from $\mathcal{P}_0$ to get $\mathcal{P}_1$ and repeat the above procedure.
\par 
 If, on the contrary, we find such a $W$, it  will be a component interval. Say $w_1 \in N=[n_1, n_2]$, $N \in \mathcal{P}_0$.
\par
We next form $\mathcal{P}_1$ by discarding from $\mathcal{P}_0$ all intervals to the right of point $w_1$, for example $R$, and, in addition, replace $N$  by the interval $[n_1,w_1]$ (if $n_1 < w_1$, otherwise just discard $N$). We then carry out the above-described procedure with $\mathcal{P}_1$, if $\mathcal{P}_1 \neq \emptyset$.
\par
Continuing in this way we see that $\mathcal{P}_{n+1}$ has at least one less interval than $\mathcal{P}_n$, so the algorithm terminates after a finite number of steps. 
\par
 Finally, in the case $c_1 = c_2 = b$ there may be a component interval of $Z_F^c$ of the form $(r,b)$, $r \in [a,b)$. This may be found in a similar way as those of the form $(a,r)$.       
 
 \begin{Remark}
 We now comment briefly on how one can modify  our algorithm to deal with piecewise cubic functions that are only continuous. In this case the notion of a bridge interval has to be changed since function $F$ might not be differentiable at the endpoint of a component intervals of $Z_F^c$ and hence that end point needn't belong to an interval of strict concavity. 
 Accordingly, we say that $(\alpha, \beta)$ is  a bridge interval if conditions (\ref{bridge2}) and (\ref{bridge3}) hold and, in addition, 
 \[
 F'(\alpha - ) \geq \frac{ F (\beta) - F (\alpha)}{\beta - \alpha} \geq F'(\alpha+) \mbox{ and } F'(\beta -) \geq \frac{ F (\beta) - F (\alpha)}{\beta - \alpha}  \geq F'(\beta +).
 \]
 \par
 Again, Lemma \ref{OldLemma3} must be modified to compensate for the $F$ need not be differentiable. To do this we allow for {\it three} possibilities, namely, $\alpha = a$, $\alpha$ is contained in interval of strict concavity of $F$ {\it or} $\alpha$ is one of the points at which $F'(\alpha - ) > F'(\alpha +)$; a similar change must be made at the $\beta$. These changes necessitate our including all points of discontinuity of $F'$ as degenerate intervals in $\mathcal{P}$. 
 \par
 The iterations of our algorithm proceed much as in the differentiable case, with the difference that when some point, say $x$, is selected from $\mathcal{P}_i$ we must check if $(\alpha ,x)$ (or $(\beta, x)$ ) is a bridge interval in the new sense.  This can be done in a manner similar to the one we described for determining if $(\alpha,\beta)$ is a bridge interval in the old sense. 
 \end{Remark}

 \section{Proof of Lemmas 1-7}\label{section3}
 
 \begin{proof}[Proof of Lemma \ref{SeptemberLemma1}]

 Since $\hat F$ is concave it is continuous on the interior of $I$. This continuity ensures that, for all $\varepsilon >0$, there exists a slope $m$ such that the graph of $F$ lies under the line
 \[
 l_a(x) = F(a) + m (x- a ) + \varepsilon.
 \]
 But then $l_a$ would be a concave majorant of $F$, so
 \[
 F(x) \leq \hat F(x) \leq l_a(x), \; x \in I.
 \]
As $\varepsilon >0$ is arbitrary, $\hat F$ is continuous at $a$, with $\hat F(a) = F(a)$. 
A similar argument shows $\hat F$ is continuous at $b$, with $\hat F(b) = F(b)$.

Let $J$ and $l$ be as in the statement of Lemma~\ref{SeptemberLemma1} and suppose $y$ is a point at which $F - l$ achieves its maximum value on $I$. Since $F$ lies below the line $l + F(y) - l(y)$, so does $\hat F$. In particular, $\hat F (y) \leq F(y)$, so $\hat F(y) = F(y)$ and hence $y \notin J^\circ$. But, $\hat F(\alpha) = F(\alpha)$ and $\hat F(\beta) = F(\beta)$, so, by concavity, $\hat F$ lies above $l$ on $J$ and below $l$ off $J^\circ$. Thus,
\[
F(y) - l(y) \leq \hat F(y) - l(y) \leq 0,
\]
whence 
\[
F \leq l + F(y) - l(y) \leq l.
\]
This means $\hat F$ lies below $l$ on $J$. It follows that $\hat F = l$ on $J$. 
 \end{proof}
 \begin{proof}[Proof of Lemma \ref{SeptemberLemma2}]
 If $x \in (a,b)\cap Z_F$ then $\hat F (x) = F(x)$. Since $\hat F$ is a concave majorant of $F$, for any $w$ and $y$ satisfying $a <w < x < y < b$, we have 
 \[
 \frac{F(y) - F(x) }{y-x} \leq \frac{\hat F(y) - \hat F(x)}{y-x} \leq \frac{\hat F(x) - \hat F(w) }{x-w} \leq \frac{F(x) - F(w)}{x-w}  =\frac { F(w) -  F(x) }{w-x}.
 \]
Since $F$ is differentiable at $x$, the squeeze theorem shows that $(\hat F)'(x)$ exists and equals $F'(x)$. 

Lemma~\ref{SeptemberLemma1} shows that, on $(\alpha,\beta)$, $\hat F$ is a line with slope $\frac{F(\beta)-F(\alpha)}{\beta-\alpha}$. So it is differentiable on $(\alpha, \beta)$ and has one-sided derivatives at the points $\alpha$ and $\beta$. If $\alpha$ or $\beta$ is in $(a,b)\cap Z_F$ the derivative of $\hat F$ exists there and, of course, coincides with its one-sided derivative. If $\alpha=a$ or $\beta=b$, the endpoints of the domain of $\hat F$, then $(\hat F)'$ is a necessarily just a one-sided derivative. We conclude that $(\hat F)'=\frac{F(\beta)-F(\alpha)}{\beta-\alpha}$ on the closed interval $[\alpha, \beta]$.

Evidently, $(\hat F)'$ is continuous at each $x\in X_F^c$. Suppose $F'$ is continuous at $x\in(a,b)\cap Z_F$. If $a < w < x <y < b$ then any component of $Z_F^c$ that intersects $(w,y)$ has at least one endpoint in $(w,y)$. It follows that $(\hat F)'(w,y) \subset F'(w,y)$. Since $F'$ is continuous at $x$, so is $(\hat F)'$.

 \end{proof}
 
 \begin{proof}[Proof of Lemma \ref{OldLemma1}]
 To verify the first statement, one need only observe that between two points in $D$ (at which $F = M$) $\hat F = M$. 
 
 The second statement follows from a simple contradiction argument: Assume that there are $x_1, x_2 \in (a, c_1)$, $x_1 < x_2$, such that  $\hat F (x_1) \geq \hat F(x_2)$. Then $\hat F (x_1) < \hat F (c_1)$ implies that 
 \[
 \hat F(x_2) < \hat F(x_1) \frac{c_1 - x_2}{c_1 - x_1} +  \hat F (c_1)  \left(  1 - \frac{ c_1 - x_2}{c_1 - x_1}\right).
 \]  
But this contradicts the concavity of $\hat F$. Consequently, we have $\hat F (x_1) < \hat F (x_2)$. An analogous argument shows that $\hat F$ is strictly decreasing on $(c_2, b)$.  
 \end{proof}
 
 \begin{proof}[Proof of Lemma~\ref{observation}]
 The second part follows from Lemma~\ref{OldLemma1}, as $\hat F$ is strictly increasing on $(a,c_1)$ and strictly decreasing on $(c_2,b)$. This leads to contradiction as if $F(y) = \hat F (y)$ then $F'(y) = (\hat F)' (y)$ by Lemma~\ref{SeptemberLemma2} if $y$  is an isolated point of $Z_F$ 
 and trivially otherwise. 
 
To prove the first part: suppose for contradiction that $\hat F (y) = F (y)$. Then
 \[
 \hat F(y) = F (y) \leq  F(x) \frac{y-x}{z-x} + F(z) \frac{z-y}{z-x} \leq \hat F (x) \frac{y-x}{z-x} + \hat F(z) \frac{z-y}{z-x}, 
 \]
 which is in contradiction with the strict concavity of $\hat F$. 
 \end{proof}

 \begin{proof}[Proof of Lemma \ref{OldLemma2}]
 For  $x$ in bridge interval $J = (\alpha, \beta)$, condition(\ref{bridge2}) yields
 \begin{eqnarray*} 
 F(x) &\leq&  F (\alpha)\frac{\beta - x }{\beta - \alpha}  + F (\beta )  \frac{x - \alpha }{\beta - \alpha}\\
 &\leq& M,
 \end{eqnarray*}
  with equality only with $F (\alpha) = F(\beta) = M$. Thus, $J$ intersects $C$ only if both endpoints are contained in $C$. The conclusion follows. 
 \end{proof} 
 \begin{proof}[Proof of Lemma \ref{OldLemma3}]
 When $a =\alpha$ or $b = c_1 = \beta$ there is nothing to prove. Assume first, then, that $\alpha > a$ and choose  $K = [k_1, k_2] \in P$ such that  $\alpha\in[k_1,k_2)$. For any $x\in J\cap(\alpha,k_2)$, Lemmas 2 and 3 combine to give, 
\[
F(x)<\hat F(x)=F(\alpha)+(x-\alpha)F'(\alpha).
\]
Since $F$ lies below its tangent line, it is neither linear nor strictly convex on $[k_1,k_2]$.

Thus, $F$ must be strictly concave on $K$.  Lemma~\ref{OldLemma1}  implies that $\hat F$ is strictly increasing on $(a,c_1)$, hence $(\hat F)'(\alpha) > 0$.  Lemma~\ref{SeptemberLemma2} yields that $(\hat F)'$ exists and $(\hat F)'(\alpha) =  F' (\alpha)$.  The choice of $P$ ensures that $F$ is monotone on $K$. Hence $F$ is increasing on $K$. 
\par
A similar argument yields $F$ strictly concave and increasing on $L = [l_1,l_2]$ when $\beta < c_1$.   
 \end{proof}
 
 \begin{proof}[Proof of Lemma \ref{OldLemma4}]
 Since $F'$ is decreasing on $L$ and $R$, $J = [c,d]$, with $c = \max \left[F'(l_2), F'(r_2) \right]$ and $d = \min \left[F'(l_1), F'(r_1) \right]$.
 \par 
 Now, 
 \[
 F'(x) = 
 \begin{cases}
P_L' (x) = 3 A x^2 + 2 B x + C \mbox{ on } L,\\
P_R' (x) = 3 W x^2 + 2 X  x + Y \mbox{ on } R,\\
 \end{cases}
 \]
 with $F'$ decreasing on both intervals. So, the unique root, $a(y)$ and $b(y)$, of 
 \[
 P_L' (a(y)) = y \mbox{ and } P_R' (b(y)) = y, \, y \in J,
 \]
 can be obtained from the formulas
 \[
 a(y) = -  \frac{1}{3A} [B \pm \sqrt{3Ay + B^2 - 3AC}]
 \]
 and 
 \[
 b(y) = -\frac{1}{3W} [X \pm \sqrt{3Wy + X^2 - 3WY}].
 \]
 We now seek $y \in J$ so that 
 \[
 \frac{F(b(y)) - F(a(y))}{b(y) - a (y) } = y
 \]
 or 
 \begin{equation}\label{(5)}
 F(b(y)) - y b(y) - (F(a(y)) - y a (y) ) = 0.
 \end{equation}
 \begin{figure}
\includegraphics[scale = 0.28]{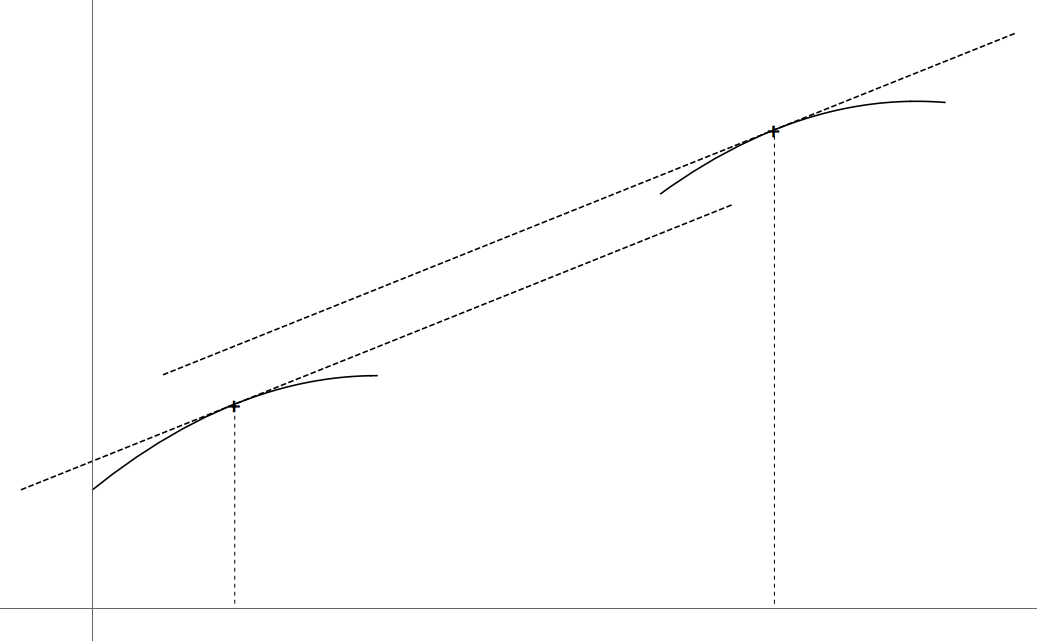}
\put(-230,0){$a(y)$}
\put(-80,0){$b(y)$}
\put(-187,65){$P_L$}
\put(-30,140){$P_R$}
\caption{For each $y \in (P_L)'(L) \cap (P_R)'(R)$ there exist exactly one $a(y) \in L$ and $b(y) \in R$ such that $P_L' (a(y)) = R_L'(b(y)) = y$.}
\label{figureSexticWrong}
\end{figure}
\begin{figure}
\includegraphics[scale = 0.28]{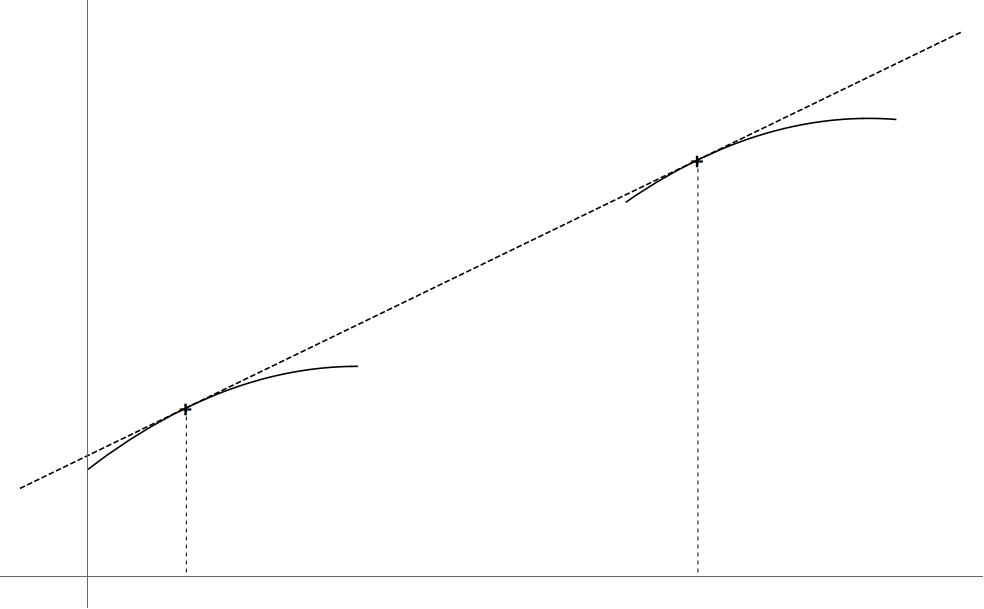}
\put(-230,0){$a(y)$}
\put(-80,0){$b(y)$}
\put(-180,57){$P_L$}
\put(-25,127){$P_R$}
\caption{There is a $y_0 \in (P_L)'(L) \cap (P_R)'(R)$ so that the corresponding $a(y_0)$ and $b(y_0)$ referred to in the caption of Figure~\ref{figureSexticWrong} satisfy 
$
y_0 = \frac{F(b(y_0)) - F(a(y_0))} {b(y_0) - a(y_0)} = \frac{P_R(b(y_0)) - P_L(a(y_0))}{b(y_0) - a(y_0)},
$ 
whence 
$
P_L'(a(y_0)) = P_R'(b(y_0)) = y_0 = \frac{F(b(y_0)) - F(a(y_0))}{b(y_0) - a(y_0)}.
$ }
\label{figureSexticRight}
\end{figure}
 Figures~\ref{figureSexticWrong} and Figures~\ref{figureSexticRight} below illustrate the geometric meaning of equation (\ref{(5)}). 
 Letting 
 \[
 \gamma(y) = 3Ay + B^2 - 3AC \mbox{  and } \delta(y) = 3Wy + X^2 - 3WY,
 \]
 the equation (\ref{(5)}) is equivalent to 
 \begin{equation}\label{6}
 \mu_1 + \mu_2 \sqrt{\gamma} + \mu_3 \sqrt{\delta} = 0,
 \end{equation}
 with $\mu_1, \mu_2, \mu_3$ linear functions of $y$, namely,
 \[
 \mu_2 (y) = - \frac{2 \gamma}{27 A^2 }, \; \mu_3 = \frac{2\delta} {27 W^2}
 \]
 and 
 \[
 \mu_1 (y) = \frac{1}{3} \left(\frac{X}{W} - \frac{B}{A} \right) y + \left( Z + \frac{2X^2}{27  W^2} - \frac{YX}{3W}  \right) - \left( D + \frac{2 B^2}{27 A^2} - \frac{CB}{3A}  \right).
 \]
 We claim the solution of (\ref{(5)}) is a root of the sextic polynomial equation
 \begin{equation}\label{7}
 (\mu_1^2 - \mu_2^2 \gamma - \mu_3^2 \delta )^2 - 4 \mu_2^2 \mu_3^2 \gamma \delta = 0.
 \end{equation}
 Indeed, isolating $\mu_1$ in (\ref{6}), then squaring both sides gives 
 \begin{equation}\label{8}
 \mu_1^2 = \mu_2^2 \gamma + \mu_3^2 \delta + 2 \mu_2 \mu_3 \sqrt{\gamma} \sqrt{\delta}.
 \end{equation}
 Isolating the term in (\ref{8}) with the square roots and squaring both sides yields  (\ref{7}).
 \end{proof}
 
 The following remark is given to make the appearance of the sextic equation seem more natural.
 \begin{Remark}
 Suppose, for definiteness, the $a(y)$ and $b(y)$ referred to in the proof of Lemma~\ref{OldLemma4} are given by 
 \[
 a(y) = \frac{-B}{3A} + \frac{1}{3A} \sqrt{3Ay + B^2 - 3AC} \mbox{ and } b(y) = \frac{-X}{3W} + \frac{1}{3W} \sqrt{3WY + X^2 - 3WY}.
 \]
 Then, equation (\ref{(5)}) can be written
 \begin{multline*}
 P_R \left( \frac{-X}{3W} + \frac{1}{3W} \sqrt{3WY + X^2 - 3WY} \right) - P_L \left( \frac{-B}{3A} + \frac{1}{3A} \sqrt{3Ay + B^2 - 3AC} \right)  \\
 = y \left( \frac{-X}{3W} + \frac{B}{3A} + \frac{1}{3W} \sqrt{3WY + X^2 - 3WY} - \frac{1}{3A} \sqrt{3Ay + B^2 - 3AC}  \right). 
 \end{multline*}
 In our original proof of Lemma~\ref{OldLemma4} we rearranged the terms in this version of  (\ref{(5)}), then squared both sides. We repeated this procedure a few times to get rid of the square roots and so arrive arrive at the sextic equation (\ref{7}).
 \end{Remark}

\section{Justification of the algorithm}\label{section4}
The purpose of this section is to prove 
\begin{Theorem}\label{OldTheorem5}
Let $F$ be differentiable piecewise cubic function. Then  the bridge intervals coming out of the algorithm are precisely the component intervals of $Z_f^c$. 
\end{Theorem}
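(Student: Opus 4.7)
My plan is to prove the theorem by establishing both inclusions between the set of bridges committed by the algorithm and the set of components of $Z_F^c$. By Lemma~\ref{OldLemma2} it suffices to work on $(a,c_1)$, $(c_1,c_2)$, and $(c_2,b)$ separately. The middle interval is handled directly by Lemma~\ref{OldLemma1}, and $(c_2,b)$ is symmetric, so I concentrate on $(a,c_1)$.

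First I would show that the algorithm's search space contains all true components. If $(\alpha,\beta)\subset(a,c_1]$ is a component of $Z_F^c$, then equations (\ref{1})--(\ref{3}) combined with Lemma~\ref{SeptemberLemma2} give the bridge conditions (\ref{bridge2}) and (\ref{bridge3}), while Lemma~\ref{OldLemma3} places its endpoints (or only $\beta$ if $\alpha=a$) in distinct intervals of $\mathcal{P}$. Thus every component is an object of precisely the type tested by Lemma~\ref{OldLemma4} and the verification criterion immediately following its statement.

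For soundness I would proceed by induction on the iterations of the algorithm, showing that each newly committed bridge $W=(w_1,w_2)$ is a component of $Z_F^c$. Let $l$ be the chord over $W$ and define $G=l$ on $[w_1,w_2]$ and $G=\hat F$ elsewhere. The bridge condition (\ref{bridge2}) forces $G\geq F$, and continuity is automatic. The crucial point is concavity across $w_1$ and $w_2$: since these endpoints lie in intervals of $\mathcal{P}$, Lemma~\ref{SeptemberLemma2} identifies $(\hat F)'(w_1^-)$ and $(\hat F)'(w_2^+)$ with values of $F'$, and (\ref{bridge3}) asserts that these match the slope of $l$. Therefore $G$ is a concave majorant of $F$, so $\hat F\leq G$; combined with $\hat F\geq F$ and (\ref{bridge2}), this yields $\hat F=l$ on $[w_1,w_2]$ with $\hat F>F$ on $(w_1,w_2)$, identifying $W$ as a component.

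For completeness I would maintain the invariant that at the start of each iteration, every undiscovered component in $(a,c_1)$ has its right endpoint in an interval of the current $\mathcal{P}_i$. When the algorithm discards the rightmost $R\in\mathcal{P}_i$ after finding no admissible bridge ending in $R$, the exhaustiveness of Lemma~\ref{OldLemma4} combined with the verification criterion shows that no bridge interval has its right endpoint in $R$, and hence by the first step no component does either. When a bridge $W$ is committed, any remaining component must lie in $(a,w_1)$ because components of $Z_F^c$ are pairwise disjoint, which justifies the discarding of intervals to the right of $w_1$. The principal obstacle I foresee is the concavity verification in the soundness step; it rests entirely on the tangency identity in (\ref{bridge3}) and on both endpoints sitting in intervals where $F$ is strictly concave and increasing, so that $(\hat F)'$ coincides with $F'$ in one-sided neighborhoods of $w_1$ and $w_2$.
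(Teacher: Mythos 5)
Your decomposition into a soundness half and a completeness half, and the reduction to $(a,c_1)$ via Lemmas~\ref{OldLemma1} and~\ref{OldLemma2}, matches the paper's strategy, and your first step (every component of $Z_F^c$ is a bridge interval whose endpoints are located by Lemma~\ref{OldLemma3}) is sound. The gap is in your soundness argument. A bridge interval is defined only by conditions (\ref{bridge2}) and (\ref{bridge3}); these are necessary but not sufficient for being a component of $Z_F^c$. A bridge $W=(w_1,w_2)$ can be strictly contained in a larger component $(m_1,m_2)$ with $m_1<w_1$, in which case $\hat F(w_1)>F(w_1)=l(w_1)$. Your function $G$, equal to $l$ on $[w_1,w_2]$ and to $\hat F$ elsewhere, then has a downward jump at $w_1$: continuity is not ``automatic,'' it is equivalent to $w_1,w_2\in Z_F$, which is precisely what you are trying to prove. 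Likewise, Lemma~\ref{SeptemberLemma2} identifies $(\hat F)'$ with $F'$ only at points of $Z_F$, not at arbitrary points lying in intervals of $\mathcal{P}$, so your concavity check across $w_1$ and $w_2$ also presupposes the conclusion.

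What closes this gap in the paper is the observation that the components in $[a,c_1)$ are exactly the \emph{maximal} bridge intervals, together with Lemma~\ref{OldLemma6}: among overlapping bridge intervals with right endpoints in the same interval $R$ of strict concavity and increase, the one with the larger right endpoint has the smaller left endpoint, hence contains the other. Since the algorithm fixes the rightmost $R$ in the current $\mathcal{P}_n$ and scans for the leftmost admissible left endpoint, the bridge it commits is maximal among those meeting $R$, and is therefore a component. Your soundness step never invokes Lemma~\ref{OldLemma6} or any maximality argument, so as written it would certify non-maximal bridges as components. The completeness half of your argument is closer in spirit to the paper's (which likewise tracks which intervals of $\mathcal{P}$ can still contain the right endpoint of an undiscovered component), but it too leans on the unproved claim that every committed bridge is a component, so the maximality argument must be supplied before either half stands.
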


For simplicity, we consider only the components in $[a,c_1)$. We begin with the preparatory
 \begin{Lemma}\label{OldLemma6}
Suppose that $F$ is absolutely continuous on $I_0 = [a,b]$.   Let $I = (a_1, b_1 )$ be a bridge interval with right hand endpoint in an interval $R$ on which $F$ is strictly concave and increasing. If $J = (a_2, b_2)$ is another bridge interval such that $I \cap J \neq \emptyset$, $b_2 \in R$ and $b_1 < b_2$, then $a_2 < a_1$. 
 \end{Lemma}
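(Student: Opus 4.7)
The plan is to argue by contradiction. Assume $a_2 \geq a_1$. First I would observe that since $I \cap J \neq \emptyset$ and $b_1 < b_2$, the interval $J$ cannot sit entirely to the right of $I$, so necessarily $a_2 < b_1$. Combining with the assumption, this places $a_2$ in $[a_1, b_1)$ and, since $b_1 \in (a_2, b_2)$, also puts $b_1$ in the interior of $J$.

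The second preparatory observation is a comparison of the two slopes. Let $m_i := \frac{F(b_i)-F(a_i)}{b_i - a_i}$ for $i = 1,2$. By condition (\ref{bridge3}) for the bridge intervals, $m_i = F'(b_i)$. Since $b_1, b_2 \in R$ and $F$ is strictly concave on $R$, $F'$ is strictly decreasing on $R$, so $b_1 < b_2$ forces $m_1 > m_2$.

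Now I would chain the bridge inequalities. Because $a_2 \in [a_1, b_1)$, condition (\ref{bridge2}) applied to the bridge $I$ (using the linear interpolant, with equality when $a_2 = a_1$) gives
\begin{equation*}
F(a_2) \leq F(a_1) + (a_2 - a_1)\, m_1.
\end{equation*}
Because $b_1 \in (a_2, b_2)$, condition (\ref{bridge2}) applied to the bridge $J$ gives
\begin{equation*}
F(b_1) < F(a_2) + (b_1 - a_2)\, m_2.
\end{equation*}
On the other hand, $F(b_1) = F(a_1) + (b_1 - a_1)\, m_1$ by the definition of $m_1$. Substituting the first inequality into the second and using this identity would yield
\begin{equation*}
F(a_1) + (b_1 - a_1)\, m_1 < F(a_1) + (a_2 - a_1)\, m_1 + (b_1 - a_2)\, m_2,
\end{equation*}
which simplifies (after cancelling $F(a_1)$ and the $(a_2 - a_1)\, m_1$ term) to $(b_1 - a_2)\, m_1 < (b_1 - a_2)\, m_2$. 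Since $b_1 > a_2$, this gives $m_1 < m_2$, contradicting the slope comparison from the previous paragraph.

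I do not anticipate a serious obstacle: the key ideas (strict concavity of $F$ on $R$ forces $m_1 > m_2$, while the two bridge inequalities combined force $m_1 < m_2$) are clean and do not require the absolute continuity hypothesis beyond what is already implicit in the bridge definition. The only point requiring a touch of care is handling the endpoint case $a_2 = a_1$ uniformly with the case $a_2 > a_1$, which the weak inequality $F(a_2) \leq F(a_1) + (a_2-a_1)m_1$ (equality when $a_2 = a_1$) accomplishes cleanly.
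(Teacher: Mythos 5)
Your proof is correct, and it takes a genuinely more elementary route than the paper's. Both arguments run by contradiction, both reduce to $a_2<b_1$, and both exploit the strict concavity of $F$ on $R$ to compare $F'(b_1)$ and $F'(b_2)$; but the paper then represents $F(b_2)$ via the integral $\int_{b_1}^{b_2}F'(t)\,dt$ (this is where the absolute continuity hypothesis is actually used) and bounds that integral pointwise by $F'(b_2)$ to conclude $l_I(a_2)<l_J(a_2)$, contradicting $F(a_2)=l_J(a_2)<l_I(a_2)$. You avoid the integral entirely: evaluating the bridge inequality (\ref{bridge2}) for $I$ at $a_2$ and for $J$ at $b_1$, and using $F(b_1)=l_I(b_1)$, you cancel down to $(b_1-a_2)m_1<(b_1-a_2)m_2$, which contradicts $m_1=F'(b_1)>F'(b_2)=m_2$. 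What your version buys is twofold: it needs only the differentiability of $F$ at $b_1$ and $b_2$ (already implicit in the bridge condition (\ref{bridge3})) rather than absolute continuity, and the weak inequality $F(a_2)\le l_I(a_2)$ cleanly covers the boundary case $a_2=a_1$, which the paper's proof silently excludes by assuming $a_1<a_2$ strictly. What the paper's integral formulation buys is a template that extends to settings where $F$ is merely absolutely continuous and the slope comparison must be made in an averaged rather than pointwise sense.
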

\begin{proof}
Let 
\[
l_I (x) = F(a_1) + (x-a_1) \frac{F(b_1) - F(a_1)}{b_1 - a_1}
\]
and, similarly,
\[
l_J (x) = F(a_2) + (x-a_2) \frac{F(b_2) - F(a_2)}{b_2 - a_2}.
\] 
Assume, if possible, $a_1 < a_2$. Then, $a_2 < b_1$, otherwise $I\cap J = \emptyset$. 
So,
\begin{equation}\label{lemma6contradiction}
l_J(a_2) = F(a_2) < l_I(a_2),
\end{equation}
since $I$ is a bridge interval.  The latter also implies 
\[
F(b_2) = l_I (a_2) + (b_1 - a_2) F'(b_1) + \int_{b_1}^{b_2} F'(t) \, d t;  
\]
further, $J$ being a bridge interval, we have 
\[
F(b_2) = l_J (a_2) + (b_2 - a_2)F'(b_2).
\] 
Therefore, 
\[
0 =   l_J (a_2) - l_I (a_2) + (b_2 - a_2)F'(b_2) -  (b_1 - a_2) F'(b_1) - \int_{b_1}^{b_2} F'(t) \, d t.
\]
The strict concavity of $F$ on $R$ ensures that $F'(t) > F'(b_2)$ for $t \in R$, $t < b_2$.  Thus
\begin{eqnarray*}
 l_I (a_2) - l_J (a_2) &=& (b_2 - a_2) F'(b_2) - (b_1 - a_2) F'(b_1 )- \int_{b_1}^{b_2} F'(t) \, d t \\
&<& (b_2 - a_2) F'(b_2) - (b_1 - a_2) F'(b_2 )- \int_{b_1}^{b_2} F'(t) \, d t \\
&=& (b_2 - b_1) F'(b_2) - \int_{b_1}^{b_2} F'(t) \, d t < 0.
\end{eqnarray*}
Consequently,
\[
l_I(a_2) - l_J (a_2) < 0, 
\]
thereby contradicting (\ref{lemma6contradiction}). 
\end{proof}
\begin{proof}[Proof of Theorem \ref{OldTheorem5}.] 
 As a consequence of Lemma~\ref{OldLemma2} one gets that the  component intervals are split into three groups: component intervals contained in $[a,c_1]$, $[c_1,c_2]$ and component intervals which are subsets of $[c_2,b]$. We begin by observing that component intervals of $Z_F^c$ in $[a,c_1]$ are the maximal bridge intervals there.
\par
To the end of showing every bridge interval coming out of the algorithm is a component interval of $Z_F^c$, fix an iteration, say the $n$-th, of the procedure. Let $R=[r_1, r_2]$ be that 
interval in $\mathcal{P}_n$ closest to $c_1$. According to Lemma~\ref{OldLemma6}, if there are bridge intervals with righthand endpoint in $R$, the one closest to $c_1$ will be the bridge interval chosen by the algorithm and, moreover, will be a maximal bridge interval.
\par
We next prove \emph{all} component intervals of $Z_F^c$ (in $[a,c_1)$) come out of the algorithm. Assume, if possible, $M = (m_1, m_2)$ is a component not obtained by the algorithm.  Let $S = [s_1, s_2]$ be that member of $\mathcal{P}$ such that  $m_2 \in S$. 
\par
Now, either $S$ was chosen as an $R$ in some iteration or it was not. If it was chosen and $M$ is not the bridge interval with righthand endpoint in $S$ closest to $c_1$, then another bridge interval, $N = (n_1, n_2)$, is; in particular, $M$ and $N$ satisfy the hypotheses of Lemma \ref{OldLemma6}, with $m_2 < n_2$. We conclude $M \subset N$, which contradicts the maximality of $M$. 
\par
 Finally, suppose $S$ was not chosen. Then, there is a last iteration, say the $n$-th, such that
 $S \in \mathcal{P}_n$.  Let $T\in \mathcal{P}_n$ be the interval in $\mathcal{P}_n$ closest to $c_2$.
 
If $T$ does not contain the righthand endpoint of a bridge interval, $S$, will be chosen in the next iteration, which can't be. So, let $N = (n_1, n_2)$ be a bridge interval, indeed a component interval of $Z_F^c$, having $n_2 \in T$. Now, $n_1$ cannot be to the right of $S$ as that would entail $S \in \mathcal{P}_{n+1}$. Again, $n_1$ cannot lie to the left of $S$ nor can we have $n_1 < m_2$, since either would contradict the maximality of $M$. The only possibility left is $n_1 \in S$, $n_1 \geq m_2$. 
 \par 
 Should we have $n_1 > s_1$, $M$ would arise from $[s_1, n_1]$ in the next iteration. This  leaves the case $s_1 = m_2 = n_1$. All intervals in $\mathcal{P}_n$ contained in $[n_1, c_1] = [m_2, c_2]$ will be discarded at the end of the $n$-th step. But, according to Lemma \ref{OldLemma3}, there exists an interval in $\mathcal{P}_{n+1}$ with $m_2$ as its right hand endpoint, which interval will be the one in $\mathcal{P}_{n+1}$ closest to $c_1$. As $m_2$ belongs to that interval $M$ would come out of the $(n+1)$-th step of the algorithm contrary to our assumption.
 \end{proof}

\section{Implementation of the algorithm}\label{section5}

\par
In this section we discuss ways to make the algorithm more efficient. Suppose, then, that  $F$ is a differentiable piecewise polynomial and that we are searching for component intervals contained in $[a,c_1]$. In a given iteration we have chosen the interval $R=[r_1, r_2]$ furthest to the right in the current version of $\mathcal{P}$ and we are about to seek in it and, in an appropriate interval $L$ to the left, endpoints of a bridge interval. It turns out we needn't  do this for all $L$. 

 We developed have developed a few simple criteria to determine those $L$ which cannot contain the left endpoint of a bridge interval with right endpoint in $R$.

One natural test is to require of $L$ that $F'(L)  \cap F'(R) \neq \emptyset$.  

Lemma~\ref{Lemma7} below implies that there must be an intervening interval  in $P$ between $L$ and $R$  on which $F$ is convex (or linear).  We split the intervals in  $\mathcal{P}$ into groups such that intervals in the same group are not separated by any intervening convex or linear interval. Then, bridge intervals cannot have endpoints in intervals from the same group. Consequently $L$ is a viable candidate only if it belongs to a group other then $R$.

Moreover, for $L$ to be a viable candidate it must lie to the left of the set of points at which $F$ equals its maximum value on $[a,r_1]$.  This is a consequence of Lemma~\ref{Lemma8} as, it ensures that otherwise no bridge interval has endpoints in $L$ and $R$. 

Of course, there are more such criteria. We now state and proof the two Lemmas referred to above.

\begin{Lemma}\label{Lemma7}
Let $F$ be a differentiable piecewise polynomial function.  Every bridge interval has to contain an interval from $P$ on which $F$ is  not strictly concave. 
\end{Lemma}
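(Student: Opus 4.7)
The plan is a proof by contradiction. Suppose $J = (\alpha,\beta)$ is a bridge interval, let $m = \frac{F(\beta)-F(\alpha)}{\beta-\alpha}$, and let $l$ denote the associated linear interpolant, so that (\ref{bridge2}) gives $F < l$ on $(\alpha,\beta)$ and (\ref{bridge3}) gives $F'(\alpha) = F'(\beta) = m$. Assume for contradiction that every $P$-interval contained in $(\alpha,\beta)$ has $F$ strictly concave on it; the goal is to force $F'$ to be strictly decreasing on $[\alpha,\beta]$, contradicting (\ref{bridge3}).

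Let $\alpha = q_0 < q_1 < \cdots < q_n = \beta$ be the refinement of $[\alpha,\beta]$ obtained by adjoining those $P$-partition points that lie strictly inside $(\alpha,\beta)$. Each chunk $[q_i,q_{i+1}]$ sits inside a single $P$-interval, so by the choice of $P$ (which isolates critical and inflection points), $F$ restricted to it is strictly concave, linear, or strictly convex. The interior chunks $[q_i,q_{i+1}]$ with $0 < i < n-1$ coincide with $P$-intervals strictly inside $(\alpha,\beta)$, so by hypothesis $F$ is strictly concave on each of them.

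The main technical step is handling the two edge chunks $[\alpha,q_1]$ and $[q_{n-1},\beta]$, whose enclosing $P$-intervals may extend past $\alpha$ or $\beta$ and therefore escape the contradiction hypothesis. I would argue directly from the bridge conditions; by symmetry it suffices to rule out $F$ being linear or strictly convex on $[\alpha,q_1]$. If $F$ were linear there, then $F(\alpha) = l(\alpha)$ together with $F'(\alpha) = m = l'(\alpha)$ would force $F \equiv l$ on $[\alpha,q_1]$, contradicting (\ref{bridge2}). If $F$ were strictly convex on the $P$-interval containing $\alpha$, then $F'$ would be strictly increasing there with $F'(\alpha) = m$, so $F'(x) > m$ for $x \in (\alpha,q_1]$, whence $F(x) > F(\alpha) + m(x-\alpha) = l(x)$ on that chunk, again contradicting (\ref{bridge2}).

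Combining these two observations, $F$ is strictly concave on every chunk $[q_i,q_{i+1}]$, so $F'$ is strictly decreasing on each. Because $F'$ is continuous on $[\alpha,\beta]$ (as $F$ is differentiable), patching these strictly-decreasing pieces across the partition points yields that $F'$ is strictly decreasing on the whole interval $[\alpha,\beta]$, contradicting $F'(\alpha) = F'(\beta) = m$ from (\ref{bridge3}). Therefore some $P$-interval contained in $(\alpha,\beta)$ has $F$ not strictly concave, as claimed.
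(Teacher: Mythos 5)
Your proof is correct and rests on the same contradiction as the paper's: strict concavity throughout the bridge interval forces $F'$ to be strictly decreasing, which is incompatible with $F'(\alpha)=F'(\beta)$ from (\ref{bridge3}). You are in fact more careful than the paper, which jumps straight from ``no $P$-interval inside the bridge interval fails to be strictly concave'' to ``$F$ is strictly concave on the whole bridge interval'' without the edge-chunk analysis you supply; the one case neither argument addresses is a bridge interval having an endpoint of $I$ as an endpoint, where the corresponding part of (\ref{bridge3}) is omitted, but there one contradicts (\ref{bridge2}) directly, since a strictly concave function lies strictly above its chord.
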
 

\begin{proof}
Suppose for contradiction that there is a bridge interval  $B = (b_1, b_2)$ such that  $F$ is strictly concave on   $(b_1, b_2)$. Condition \ref{bridge3} then yields that $F'(b_1) = F'(b_2)$. At the same time, strict concavity of $F$ yields that $F'$ is decreasing on $B$, which leads to contradiction. 
\end{proof}

\begin{Lemma}\label{Lemma8}
 Assume $F$ is a cubic spline, 
suppose $R=[r_1, r_2]   \subset [a, c_1]$ is an interval on which $F$ is strictly concave and increasing, with $m_2 \in R$ such that $\hat F (m_2) = F(m_2)$. Given $s < r_1$ satisfying $F(s) = \max \left\{ F(x) : x \in [a,r_1] \right\}$ and an $m_1 < r_1$ for which $M = (m_1, m_2)$ is a component interval of $Z_F^c$, one has $m_1 \in [a,s]$.     
\end{Lemma}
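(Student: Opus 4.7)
The approach is to argue by contradiction. Suppose $m_1 > s$; combined with the hypothesis $m_1 < r_1$, this places $m_1$ in the open interval $(s, r_1) \subset [a, c_1)$, and I will derive a clash between $\hat F$ being a concave majorant of $F$ and being strictly increasing on $(a, c_1)$.

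Once this assumption is in place, I would chain four elementary facts. First, $m_1$ is the left-hand endpoint of the component interval $M = (m_1, m_2)$, so Lemma~\ref{SeptemberLemma1} yields $\hat F(m_1) = F(m_1)$. Second, because $m_1 \in [a, r_1]$, the defining maximality of $s$ gives $F(m_1) \leq F(s)$. Third, $\hat F$ being a majorant of $F$ gives $F(s) \leq \hat F(s)$. Fourth, since $s < m_1$ with both points in $[a, c_1)$, the strict monotonicity from Lemma~\ref{OldLemma1} gives $\hat F(s) < \hat F(m_1)$. Concatenating, one obtains
\[
\hat F(m_1) = F(m_1) \leq F(s) \leq \hat F(s) < \hat F(m_1),
\]
which is impossible. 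Hence $m_1 \leq s$, as required.

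The only subtlety I anticipate is the boundary case $s = a$, since Lemma~\ref{OldLemma1} as literally stated furnishes strict monotonicity only on the open interval $(a, c_1)$. My plan is to patch this by a short concavity argument: if $\hat F(a) = \hat F(x_0)$ were to hold for some $x_0 \in (a, c_1)$, then concavity together with monotonicity would force $\hat F$ to be constant on $[a, x_0]$, contradicting strict increase on $(a, c_1)$; continuity of $\hat F$ at $a$, supplied by Lemma~\ref{SeptemberLemma1}, makes this limiting argument legitimate. Beyond this small bookkeeping step I do not expect any real obstacle, and no use is made of the cubic-spline structure; the core of the proof is really just the tension between the concave majorant's strict growth past $s$ and the fact that $F$ cannot exceed its maximum on $[a, r_1]$.
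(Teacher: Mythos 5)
Your proof is correct and follows essentially the same route as the paper's: both assume $m_1 > s$, derive the chain $\hat F(m_1) = F(m_1) \leq F(s) \leq \hat F(s)$, and then contradict the monotonicity of $\hat F$ supplied by Lemma~\ref{OldLemma1}. The only cosmetic difference is in how the contradiction is closed: the paper uses a concavity inequality through the three points $s < m_1 < m_2$ (via $\hat F(m_2) > \hat F(m_1)$), whereas you apply strict increase directly between $s$ and $m_1$ and correctly patch the boundary case $s=a$; both are sound.
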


\begin{proof}
Assume, if possible, $m_1 \in (s,r_1]$. Then 
$\hat F (s) \geq F(s) \geq F(m_1) = \hat F (m_1)$, by hypothesis, and $\hat F (m_2) > \hat F(m_1)$, since $\hat F$ is increasing on $[a, c_1]$ according to Lemma~\ref{OldLemma1}. Hence 
\begin{eqnarray*}
\hat F(m_1)& \leq& \frac{m_2 - m_1} {m_2 -s } \hat F (s)  +  \frac{m_1 - s}{m_2 - s } \hat F (m_2) \\
&=& \hat F (s) + (m_1 -s ) \frac{\hat F(m_2) - \hat F (s)  }{m_2 - s },
\end{eqnarray*}
which contradicts the concavity of $\hat F$.    
\end{proof}

\section{Error Estimates}\label{section6}

Given an absolutely continuous function $G$ on a closed interval $I$ of finite length, we choose $F$ to be the clamped cubic spline interpolating $G$ at the points of a partition $\varrho$ of $I$. This permits us to take advantage of the following special case of optimal error bounds for cubic spline interpolation obtained by Charles A. Hall and W. Weston Meyer in \cite{HalMey1976}.

\begin{Proposition}\label{Proposition51}
Suppose $G \in \mathcal{C}^4 (I)$  and let $\varrho := [x_0, \dots ,x_{n+1}]$ be a partition of $I$. Denote by $F$ the clamped cubic spline interpolating $G$ at the nodes of $\varrho$. Then,
\[
\left| G' (x) - F' (x)\right|  \leq  \frac{1}{24} \left\| G^{(4)} \right\|_\infty \left\| \varrho \right\|^3, \;  x \in  I, 
\]
where $\| . \|_\infty$ denotes the usual supremum norm and
\[
\| \varrho \| := \sup \{ |x_k - x_{k-1} | : k = 1, \dots ,n\}.
\]
\end{Proposition}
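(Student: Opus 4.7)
The proposition is a direct quotation of a sharp estimate from \cite{HalMey1976}, so the shortest justification is simply to cite that source. To sketch an independent proof I would reduce everything to a local analysis on a single partition subinterval $[x_{k-1}, x_k]$ (with width $h_k := x_k - x_{k-1} \le \|\varrho\|$) and combine a bound on the nodal second-derivative errors with a Peano-kernel representation of the Hermite cubic interpolation remainder. Write $e := G - F$ and $M_k := F''(x_k)$. Interpolation gives $e(x_k) = 0$ for every $k$, and the clamped end conditions give $e'(x_0) = e'(x_{n+1}) = 0$.

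First I would estimate $\max_k |M_k - G''(x_k)|$. The $\mathcal{C}^2$ matching at interior nodes, together with the clamped end conditions, produces the standard strictly diagonally dominant tridiagonal moment system for $M_0, \dots, M_{n+1}$. Subtracting the analogous system satisfied by the values $G''(x_k)$, whose right-hand side agrees with the spline system up to Taylor remainders of size $O(\|\varrho\|^2 \|G^{(4)}\|_\infty)$, and inverting via the row-sum bound for diagonally dominant tridiagonals, yields
\begin{equation*}
\max_k \bigl|M_k - G''(x_k)\bigr| \le C_1 \|\varrho\|^2 \|G^{(4)}\|_\infty
\end{equation*}
for an absolute constant $C_1$ independent of the partition.

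Next, on a fixed subinterval $[x_{k-1}, x_k]$ let $H$ be the Hermite cubic matching $G$ and $G'$ at the two endpoints. The Peano-kernel formula gives
\begin{equation*}
G(x) - H(x) = \int_{x_{k-1}}^{x_k} K_k(x,t)\, G^{(4)}(t)\,dt,
\end{equation*}
and differentiation under the integral, using the explicit truncated-power form of $\partial_x K_k$, produces the sharp pointwise bound $|(G-H)'(x)| \le \tfrac{1}{24} h_k^3 \|G^{(4)}\|_\infty$. The remaining piece $F - H$ is a cubic on $[x_{k-1}, x_k]$ vanishing at both endpoints whose endpoint second derivatives are $M_{k-1} - G''(x_{k-1})$ and $M_k - G''(x_k)$, both controlled by the first step. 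The main obstacle, and the real content of the Hall--Meyer theorem, is that these two contributions should \emph{not} be combined by a plain triangle inequality: doing so inflates the constant. The sharp value $\tfrac{1}{24}$ arises from cancellation in the sign structure of the full spline Peano kernel for $G' - F'$, and it is this cancellation that requires the delicate kernel analysis in \cite{HalMey1976}. Once that is in hand, the pointwise bound follows on every subinterval and hence on all of $I$.
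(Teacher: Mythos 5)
Your proposal takes the same route as the paper: Proposition~\ref{Proposition51} is stated there purely as a quoted special case of the Hall--Meyer optimal error bounds, with no proof supplied beyond the citation of \cite{HalMey1976}, which is exactly what you lead with. Your supplementary sketch (moment-system bound plus Peano-kernel analysis of the Hermite remainder) is a reasonable outline and you are right to flag that the sharp constant $\tfrac{1}{24}$ cannot be recovered by a plain triangle inequality but needs the kernel cancellation argument of \cite{HalMey1976}, so nothing further is required.
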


To estimate the error involved in approximating the least concave majorant, we first consider the sensitivity of the level function to changes in the original function. We recall that the level function, $f^\circ$, of $f$ is given by $f^\circ = (\hat F)'$, where $F' = f$. 

\begin{Theorem}\label{theorem52}
Suppose $F$ and $G$ are absolutely continuous functions defined on a finite interval $I$. Then  $f = F'$, $g = G'$ and denote by $f^\circ$ and $g^\circ$ the level functions of $f$ and $g$ respectively. Then $\hat F$ and $\hat G$ are also absolutely continuous on $I$, and
\[
\left\|f^\circ  - g^\circ \right\|_\infty = \left\| (\hat F)' - (\hat G)' \right\|_\infty \leq \left\|  f - g \right\|_\infty.
\]
Here $\hat F$ and $\hat G$ denote the least concave majorants of $F$ and $G$, respectively, and $f = F'$, $g = G'$, while $f^\circ = (\hat F)'$, and $g^\circ = (\hat G)'$.
\end{Theorem}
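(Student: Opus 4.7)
The plan is to reduce the theorem to a variational formula for $(\hat F)'$ and then to propagate the pointwise estimate $|f-g|\le m:=\|f-g\|_\infty$ through that formula.

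First, I would verify that $\hat F$ and $\hat G$ are absolutely continuous on $I$. Each is concave and continuous on the closed interval $[a,b]$ by Lemma~\ref{SeptemberLemma1}, so its derivative exists almost everywhere, is nonincreasing, and its integral over $[a,b]$ equals the finite quantity $F(b)-F(a)$ (respectively $G(b)-G(a)$). Hence $(\hat F)',(\hat G)'\in L^1(I)$ and both least concave majorants are absolutely continuous.

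The heart of the proof is the variational identity
\[
(\hat F)'(x)\;=\;\sup_{x<t\le b}\;\inf_{a\le s\le x}\;\frac{F(t)-F(s)}{t-s}\qquad\text{for a.e.\ }x\in(a,b),
\]
together with its analogue for $\hat G$. To establish it, I would appeal to the description of $\hat F$ furnished by Lemmas~\ref{SeptemberLemma1} and~\ref{SeptemberLemma2}: for almost every $x$, either $x\in Z_F$, in which case $(\hat F)'(x)=f(x)$, or $x$ lies in a bridge interval $(\alpha,\beta)$ with $F(\alpha)=\hat F(\alpha)$, $F(\beta)=\hat F(\beta)$, and $(\hat F)'(x)=(F(\beta)-F(\alpha))/(\beta-\alpha)$. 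The pair $(s,t)=(\alpha,\beta)$ realises this value in the sup-inf: since $F(\alpha)=\hat F(\alpha)$ and $F(s)\le\hat F(s)$, the concavity of $\hat F$ yields $\frac{F(\beta)-F(s)}{\beta-s}\ge\frac{F(\beta)-F(\alpha)}{\beta-\alpha}$ for every $s\le x$, giving the lower bound; taking $s=\alpha$ and any $t>x$, the inequality $F(t)\le\hat F(t)$ combined with concavity gives $\frac{F(t)-F(\alpha)}{t-\alpha}\le\frac{F(\beta)-F(\alpha)}{\beta-\alpha}$, supplying the matching upper bound. The case $x\in Z_F$ is recovered by letting $\alpha,\beta\to x$ and using the differentiability of $F$ there.

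With the variational formula in hand the theorem follows at once. For any $s<t$ in $I$,
\[
\frac{F(t)-F(s)}{t-s}\;=\;\frac{1}{t-s}\int_s^t f\;\le\;\frac{1}{t-s}\int_s^t g+m\;=\;\frac{G(t)-G(s)}{t-s}+m.
\]
Because the constant $+m$ passes through both $\inf_{s\le x}$ and $\sup_{t>x}$, one gets $(\hat F)'(x)\le(\hat G)'(x)+m$ at almost every $x$; exchanging $F$ and $G$ provides the reverse inequality, and taking the essential supremum proves $\|f^\circ-g^\circ\|_\infty\le\|f-g\|_\infty$. The main obstacle is the careful verification of the variational formula, in particular the chord-slope estimates in the subcases $s<\alpha$, $s\in[\alpha,x]$, $t\in(x,\beta]$, and $t>\beta$, together with the routine but fussy adjustments when a bridge has an endpoint at $a$ or $b$.
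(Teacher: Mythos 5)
Your argument is correct, but it takes a genuinely different route from the paper's. The paper proves $|f^\circ(x)-g^\circ(x)|\le\|f-g\|_\infty$ a.e.\ by a four-way case analysis according to whether $x$ lies in $Z_F$ and/or $Z_G$, in each case comparing averages of $f$ and $g$ over intervals whose endpoints are endpoints of the relevant components, using $\hat F=F$ and $\hat G=G$ at those endpoints together with the monotonicity of $f^\circ$ and $g^\circ$. You instead package all of this into the single variational identity $(\hat F)'(x)=\sup_{t>x}\inf_{s\le x}\frac{F(t)-F(s)}{t-s}$, after which the comparison is immediate, since every chord slope of $F$ is within $\|f-g\|_\infty$ of the corresponding chord slope of $G$ and the additive constant passes through the $\sup$ and the $\inf$. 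The identity is true, and your chord-slope verifications in the bridge-interval case are exactly the right ones; the only place you are too brisk is the case $x\in Z_F$, where ``letting $\alpha,\beta\to x$'' should be replaced by a direct argument: for the upper bound take $s=x$, so that $\frac{F(t)-F(x)}{t-x}\le\frac{\hat F(t)-\hat F(x)}{t-x}\le(\hat F)'(x)=F'(x)$; for the lower bound use the supporting-line inequality $F(x)-F(s)\ge(x-s)F'(x)$ for all $s\le x$ (valid because $F\le\hat F$ with equality at $x$ and $\hat F$ is concave with $(\hat F)'(x)=F'(x)$) together with $F(t)-F(x)=(t-x)\bigl(F'(x)+o(1)\bigr)$ as $t\downarrow x$ and the bound $(t-x)/(t-s)\le 1$. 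With that filled in, both representations hold off a null set and your conclusion follows. What the paper's route buys is that it never needs a min--max formula and works directly with integrals of $f$ and $g$; what yours buys is a reusable closed-form expression for the level function $(\hat F)'$ and a one-line derivation of the Lipschitz-type estimate.
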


\begin{proof} 
Set
\[
Z_F = \left\{ x\in I : F(x)=\hat F(x)\right\},\;
Z_G =\left\{x\in I : G(x)=\hat G(x)\right\}
\]
and observe that $f^\circ  = f$ almost everywhere on $Z_F$ and $g^\circ = g$ almost everywhere on $Z_G$. By Lemma \ref{SeptemberLemma1}, $\hat F$ is continuous and is of constant slope on each component of the complement of $Z_F$ . It follows that $\hat F$ is absolutely continuous on $I$. Since $\hat G$ is continuous and is of constant slope on each component of the complement of $Z_G$, $\hat G$ is absolutely continuous on $I$ as well.
We consider several cases to establish that $\left| f^\circ (x)-g^\circ (x)\right| \leq \left\| f - g\right\|_\infty$ for almost every $x \in  I$.
\begin{itemize}
\item Case 1: $x\in Z_F$ and $x\in Z_G$. For almost every such $x$, 
\[
\left| f^\circ (x) - g^\circ (x)\right| = \left| f(x) - g(x)\right| \leq  \left\| f - g\right\|_\infty.
\]
\item Case 2: $x\in Z_G$ but $x\notin Z_F$. Then $x$ is in the interior of some component interval $[a,b]$ of $F$. By Lemma \ref{SeptemberLemma1}, $\hat F(a) = F (a)$ and $\hat F(b) = F (b)$.
Since $\hat F$ has constant slope on $[a, b]$, 
\[
\int^x_a f =F(x)-F(a) \leq \hat F (x)-\hat F(a) = (x-a)f^\circ (x).
\]
and 
\[
\int^b_x f =F(b)-F(x) \geq \hat F(b)- \hat F(x)=(b-x)f^\circ (x).
\]
Also, since $\hat G(x) = G(x)$ and $g^\circ$ is non-increasing,
\[
\int_a^x g=G(x)-G(a) \geq \hat G (x) - \hat G (a) = \int_a^x g^\circ \geq (x-a) g^\circ (x).
\]
and
\[
\int_x^b g = G(b) - G(x) \leq \hat G(b) - \hat G (x) = \int_x^b g^\circ \leq (b-x) g^\circ (x).
\]
Combining these four inequalities, we obtain, 
\begin{eqnarray*}
- \left\| f - g\right\|_\infty &\leq& \frac{1}{ x - a} \int_a^x (f - g) \leq f^\circ (x) - g^\circ (x) \\
&\leq& \frac{1}{b - x} \int_x^b (f -g) \leq \left\| f - g\right\|_\infty .
\end{eqnarray*}
Thus, $\left| f^\circ (x) - g^\circ (x)\right| \leq  \left\| f - g\right\|_\infty$.
\item Case 3 : $x\in Z_F$  but $x\notin Z_G$. Just reverse the roles of $F$ and $G$ in Case 2.
\item Case 4: $x \notin Z_F$ and $x \notin Z_G$. Suppose without loss of generality that $g^\circ (x) \leq 
f^\circ (x)$. Let $a$ be the left-hand endpoint of the component interval of $G$ containing $x$, and let $b$ be the right-hand endpoint of the component interval of $F$ containing $x$. By Lemma~\ref{SeptemberLemma1}, $\hat G(a) = G(a)$ and $\hat  F(b) = F(b)$. Since $g^\circ $ is constant on $(a, x)$ and non-increasing on $(x, b)$ we have
\[
(b-a)g^\circ (x)\geq \int_a^b g^\circ  = \hat G(b)- \hat G(a) \geq G(b) -G(a)= \int_a^b g.
\]
Since $f^\circ$ is non-increasing on $(a,x)$ and constant on $(x,b)$, we have
\[
(b-a)f^\circ (x)\leq \int_a^b f^\circ  = \hat F(b) - \hat F(a) \leq F(b) - F(a)= \int_a^b f.
\]
Combining these, we have 
\[
f^\circ (x) - g^\circ (x) \leq \frac{1}{b-a} \int_a^b (f-g) \leq \left\| f - g \right\|_\infty. 
\]
\end{itemize}
This completes the proof.
\end{proof}

The last result can be combined with Proposition~\ref{Proposition51} to give the desired error
estimates.
\begin{Theorem}\label{Theorem53}
 Let $\varrho$ be a partition of the interval $[a,b]$ and suppose $G \in \mathcal{C}^4([a,b])$. Let $F$ be the clamped cubic spline interpolating $G$ on $\varrho$. Then 
 \[
\left\| f^\circ - g^\circ \right\|_\infty \leq \left\| f - g\right\|_\infty \leq \frac{1}{24} \left\| G^{(4)}\right\|_\infty  \left\| \varrho \right\|^3
 \]
and for each $x \in [a, b]$,
 \[
\left| \hat F (x) - \hat G(x)\right| \leq  \frac{\min \{ x-a,b-x \}}{24} \left\| G^{(4)}\right\|_\infty \left\| \varrho \right\|^3.
\]
Here $\hat F$ and $\hat G$ denote the least concave majorants of $F$ and $G$, respectively, and $f = F'$, $g = G'$; $f^\circ  = (\hat F)'$, and $g^\circ  = (\hat G)'$.
 \end{Theorem}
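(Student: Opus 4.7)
The first chain of inequalities is an immediate composition of results already in hand. The first inequality, $\|f^\circ - g^\circ\|_\infty \leq \|f-g\|_\infty$, is exactly the content of Theorem~\ref{theorem52} once we observe that both $F$ and $G$ are absolutely continuous on $[a,b]$ (the spline $F$ is even $\mathcal{C}^2$). The second inequality, $\|f - g\|_\infty \leq \tfrac{1}{24}\|G^{(4)}\|_\infty\|\varrho\|^3$, is precisely Proposition~\ref{Proposition51} applied to the clamped cubic spline $F$ interpolating $G$ on $\varrho$.

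For the pointwise bound on $|\hat F(x) - \hat G(x)|$, the plan is to exploit two facts: that the least concave majorants agree with the original functions at the endpoints of $I=[a,b]$, and that both majorants are absolutely continuous. Being a clamped cubic spline interpolating $G$ at the nodes of $\varrho$, $F$ satisfies $F(a)=G(a)$ and $F(b)=G(b)$. Lemma~\ref{SeptemberLemma1} then gives
\[
\hat F(a) = F(a) = G(a) = \hat G(a), \qquad \hat F(b) = F(b) = G(b) = \hat G(b).
\]
Theorem~\ref{theorem52} asserts that $\hat F$ and $\hat G$ are absolutely continuous, with derivatives $f^\circ$ and $g^\circ$ respectively, so for each $x\in[a,b]$ the fundamental theorem of calculus gives
\[
\hat F(x) - \hat G(x) = \bigl(\hat F(x) - \hat F(a)\bigr) - \bigl(\hat G(x) - \hat G(a)\bigr) = \int_a^x \bigl(f^\circ - g^\circ\bigr)(t)\,dt,
\]
whence $|\hat F(x)-\hat G(x)| \leq (x-a)\|f^\circ - g^\circ\|_\infty$. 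Running the identical argument from the right endpoint instead yields $|\hat F(x)-\hat G(x)| \leq (b-x)\|f^\circ - g^\circ\|_\infty$. Taking the minimum of these two bounds and inserting the first inequality of the theorem finishes the proof.

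There is no real obstacle here; the theorem is essentially an assembly of Proposition~\ref{Proposition51}, Theorem~\ref{theorem52}, and Lemma~\ref{SeptemberLemma1}. The only point that warrants explicit mention is that the ``clamped'' interpolation hypothesis is used twice, once to invoke Proposition~\ref{Proposition51} (which assumes $F$ is the clamped spline) and once to guarantee $F=G$ at the endpoints so that $\hat F$ and $\hat G$ share boundary values.
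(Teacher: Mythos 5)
Your proof is correct and follows essentially the same route as the paper: the first chain is Theorem~\ref{theorem52} combined with Proposition~\ref{Proposition51}, and the pointwise bound comes from matching $\hat F$ and $\hat G$ at the endpoints via Lemma~\ref{SeptemberLemma1} and the interpolation property, then integrating $f^\circ - g^\circ$ from either endpoint. The only cosmetic difference is that the paper justifies the absolute continuity of $\hat F$ and $\hat G$ by their concavity rather than by citing Theorem~\ref{theorem52}.
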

 \begin{proof}
  The first inequality is just Theorem~\ref{theorem52} together with the result from \cite{HalMey1976}. For the second, observe that by Lemma~\ref{SeptemberLemma1}, $\hat F(a) = F(a)$ and $\hat G(a) = G(a)$, and since $a$ is in the partition $\varrho$, $G(a) = F(a)$. Thus, $\hat F(a) = G(a)$. Since both $\hat F$ and $\hat G$ are concave and hence absolutely continuous,
 \[
 \left| \hat F (x) - \hat G (x) \right| = \left| \int_a^x  f^\circ  (x) -  g^\circ  (x)  \right|  \leq \int_a^x \left\| f^\circ  - g^\circ  \right\|_\infty \leq \frac{x - a}{24} \left\| G^{(4)} \right\|_\infty \left\| \varrho \right\|^3.
 \]
A similar argument, using integration on $[x, B]$, shows that
 \[
\left| \hat F(x) -  \hat G(x)\right|  \leq \frac{ b - x}{24} \left\| G^{(4)}\right\|_\infty  \left\| \varrho \right\|^3
 \]
and completes the proof.
\end{proof}

\section{Examples}\label{section7}

We present here two examples involving our algorithm. 
\subsection*{Example 1.}
With our first example we illustrate the flow of the algorithm. Let $s$ be the continuously differentiable, piecewise cubic function defined on $[0,10]$, by 
\[
s(x) = s_n(x) \mbox{ on } [n-1,n],\; n = 1,2, \dots 10,
\]    
where
\begin{eqnarray*}
s_1 (x) = -1.1 x^3 + 1.1 x^2 + x + 1, && s_2 (x) = 1.3 x^3 - 5.3 x^2 + 6.6 x - 0.6, \\
s_3 (x) = -0.9 x^3 + 1.1 x^2 +  x + 1, && s_4 (x) = -1.5 x^3 +16 x^2 -56 x +67, \\
s_5 (x) =  3,  && s_6 (x) = 0.5 x^3 - 8.75 x^2 + 50 x - 90.75, \\
s_7 (x) = 2 + (x - 6.5)^2, &&  s_8 (x) = 1.5 x^3 - 33.25 x^2 + 246 x +605, \\
s_9 (x) =  x^3 - 25.5 x^2 + 216 x - 605, && s_{10} (x) = 0.6 x^3 - 16.6 x^2 + 153 x - 467.3.
\end{eqnarray*}
The graph of $s$ is given in Figure~\ref{figure1} below.

\begin{figure}
\includegraphics[scale = 0.425]{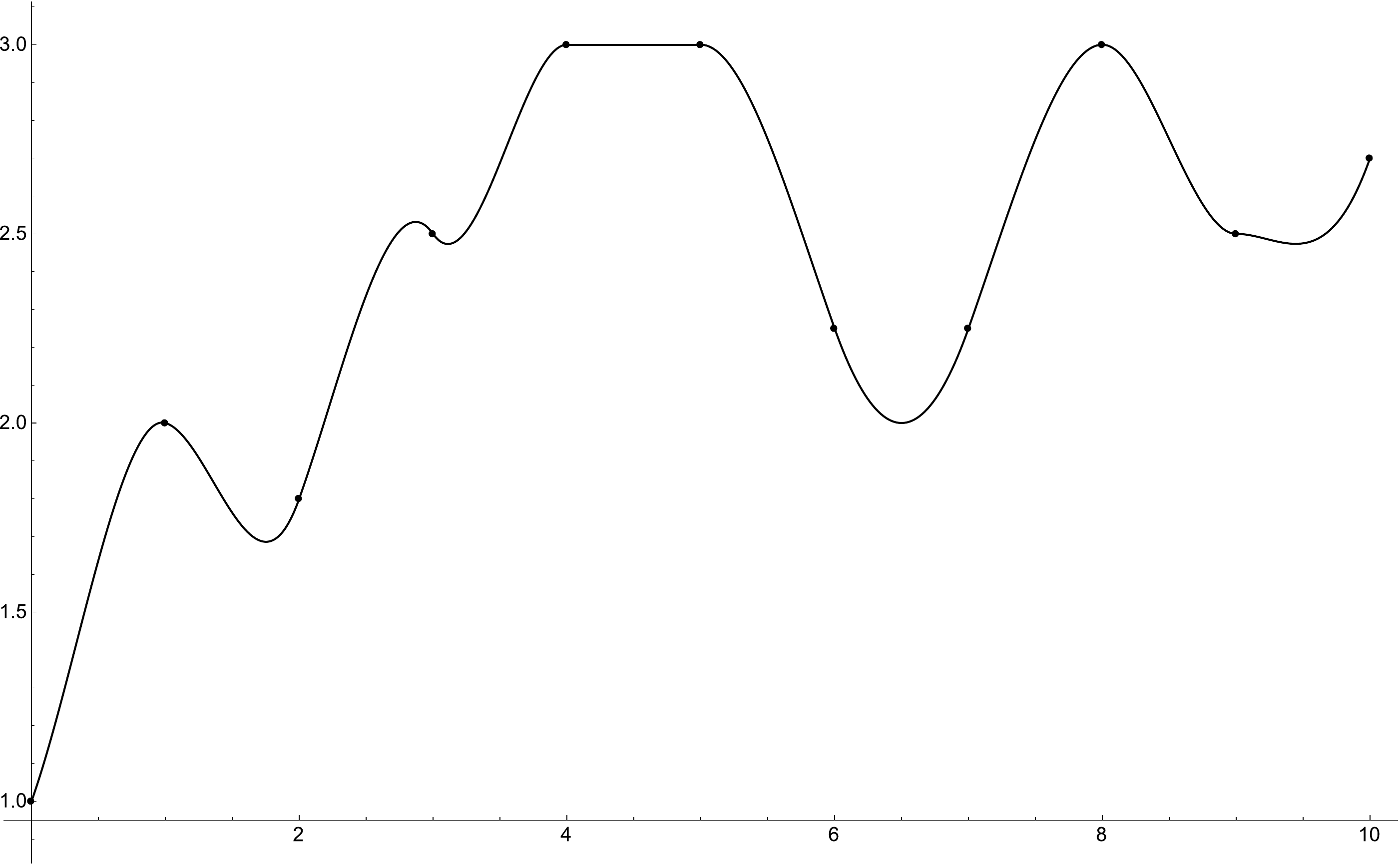}
%{SimpleSplineBlackTicks.eps}
\put(-100, 108){$s$}
\caption{Graph of $s$ with marked points where prescribed polynomials change.}
\label{figure1}
\end{figure}

To begin, $s$ attains its maximum value of $3$ on $D = [4,5]\cup \left\{ 8 \right\}$. So, $\hat{s} (x) = 3$ on $C = [4,8]$.

Since $s < 3$ on $(5,8)$ it will be a component interval. We next seek the component intervals in $[0,4]$. 
By adding to the partition those points in $[0,4]$ for which $s'$ or $s''$ changes sign we get a refined partition where, on each subinterval, $s$ is monotone and either strictly convex or strictly concave. The first derivative of $s$ changes sign at $0.97687$, $1.75204$, $2.8701$ and $3.\bar{1}$. The second derivative changes sign at $0.\bar{3}$, $1.35897$, $2.\bar{2}$ and $3.\bar{5}$. We are interested in subintervals of $[0,4]$ where $s$ is strictly concave and increasing. These are  $I_1 = [0.\bar{3}, 0.97687]$, $I_2 = [2.\bar{2}, 2.87011]$ and $I_3 = [3.\bar{5}, 4]$.
Thus, $\mathcal{P}_0 = \left\{ I_1 , I_2, I_3 \right\}$. Clearly, $I_3$ is the interval in $\mathcal{P}_0$ furthest to the right.   

There are no bridge intervals with left endpoint $0$ and right endpoint in $I_3$.

Indeed, there \emph{are} two candidate intervals of form $[a,r]$, $r \in I_3$, such that  
\begin{equation}\label{condLeft}
s'(r) = \frac{s(r) - s(0)}{r} = \frac{s(r) - 1}{r},
\end{equation}
but, for neither candidate does one have (3), that is,
\[
s(x) <  x \left[ \frac{s(r) - 1 }{r}\right], \; x \in (0,r).
\]
This can be seen in Figure~\ref{SimpleLeftEndPoint}. 
\begin{figure}
\includegraphics[scale = 0.5]{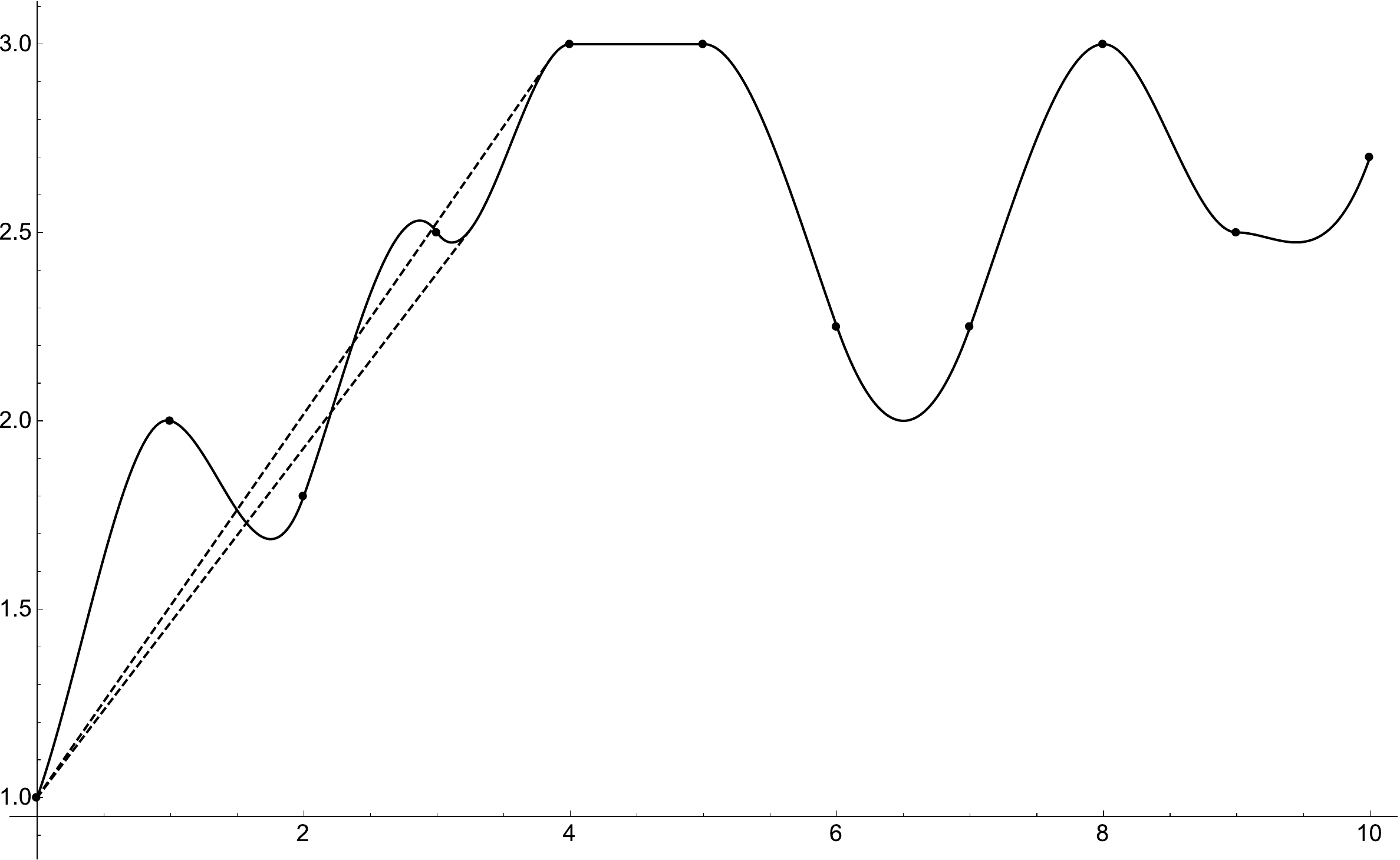}
%{SimpleSplineLeftEndpointBlackTicks.eps}
\put(-100, 120){$s$}
\caption{The two intervals with left-hand endpoint being $0$ which satisfy the first condition (\ref{condLeft}) are $[0, 3.24826]$ and $[0, 3.84606]$. But both cannot meet the second condition from definition of bridge interval. }
\label{SimpleLeftEndPoint}
\end{figure}
Again, there are two intervals with right endpoint in $I_3$ and left endpoint in $I_1$ for which  (1) and (2) holds. These are 
\[
I_{1,1} = (0.89359, 3.90772) \mbox{ and } I_{1,2} = (0.92390, 3.16878).
\]    
However, only on $I_{1,1}$ is (3) satisfied. The situation is depicted in Figure~\ref{figure3}.   
\begin{figure}
\includegraphics[scale = 0.455]{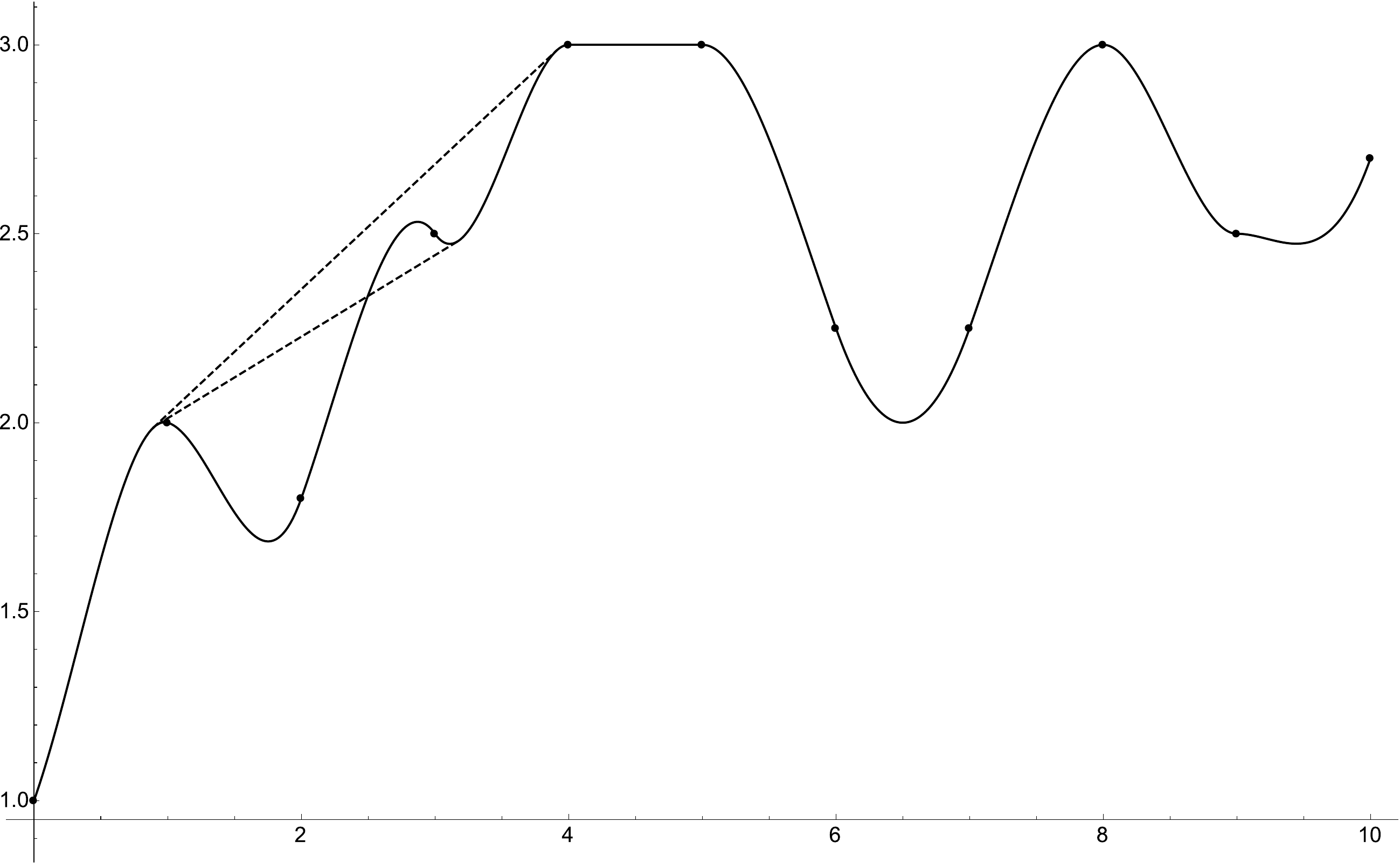}
%{SimpleSplinep1p3BlackTicks.eps}
\put(-100, 120){$s$}
\caption{This figure pictures the bridge interval joining intervals $I_1$ and $I_3$ and the other candidate. }
\label{figure3}
\end{figure}

Since no interval with left endpoint in $I_2$ can have smaller left endpoint than  left endpoint of $I_{1,1}$, the interval $I_{1,1}$ is the desired component interval.
This completes the first iteration of our algorithm.

To form $\mathcal{P}_1$ for the second iteration we, of course, discard $I_3$. We also discard $I_2$, since it is contained int $I_{1,1}$. This leaves in $\mathcal{P}_1$ only interval $I_{1}'$, as $(0.\bar{3}, 0.89359) = I_1 \setminus I_{1,1}$.

There is one bridge interval with right endpoint in $I_{1}'$ and left endpoint $0$. It is $(0,0.5)$, therefore $(0,0.5)$ is a component interval. We have thus found all component intervals in $[0,4]$.

We now seek component intervals contained in $[8,10]$. To begin we must ad to the partition points 8,9,10 the critical point $8.5$ and the inflection points $9.\bar{2}$ and $9.\bar{4}$. It is then found that the intervals on which $s$ is strictly concave  and increasing are $J_1 = [8,8.5]$ and $J_2 = [9,9.\bar{2}]$.

The interval $[8.05353,10]$ is a bridge interval with left endpoint in $J_1$ and right endpoint $10$.

The unique component interval in $[8,10]$. See Figure~\ref{SimpleRightEndPoint}.
\begin{figure}
\includegraphics[scale = 0.47]{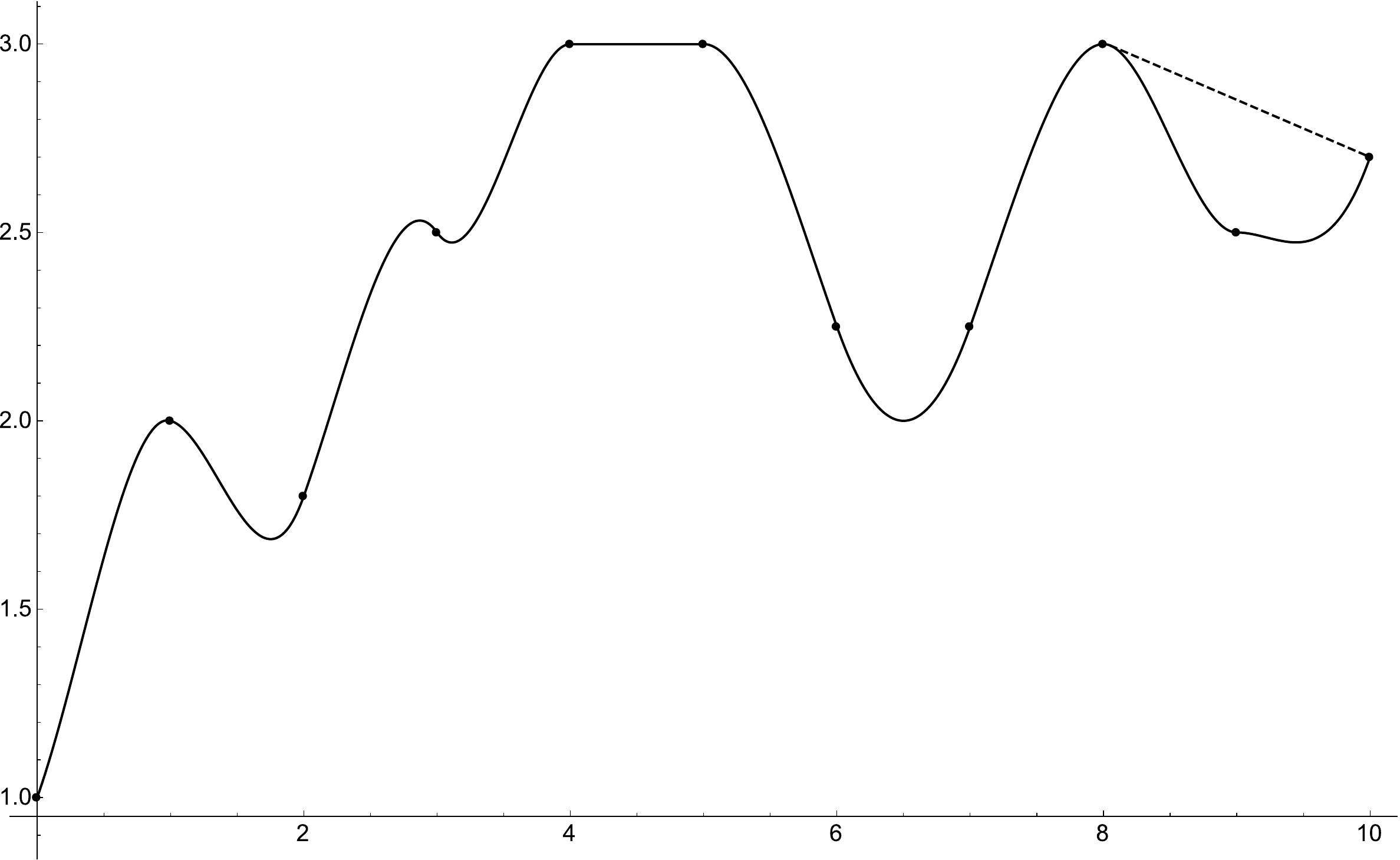}
%{SimpleSplineRightEndpointBlackTicks.eps}
\put(-95, 120){$s$}
\caption{This figure shows the component interval $(8.05353,10)$. }
\label{SimpleRightEndPoint}
\end{figure}

The graph of $\hat s $ appears in Figure~\ref{figure6}.
\begin{figure}
\includegraphics[scale = 0.48]{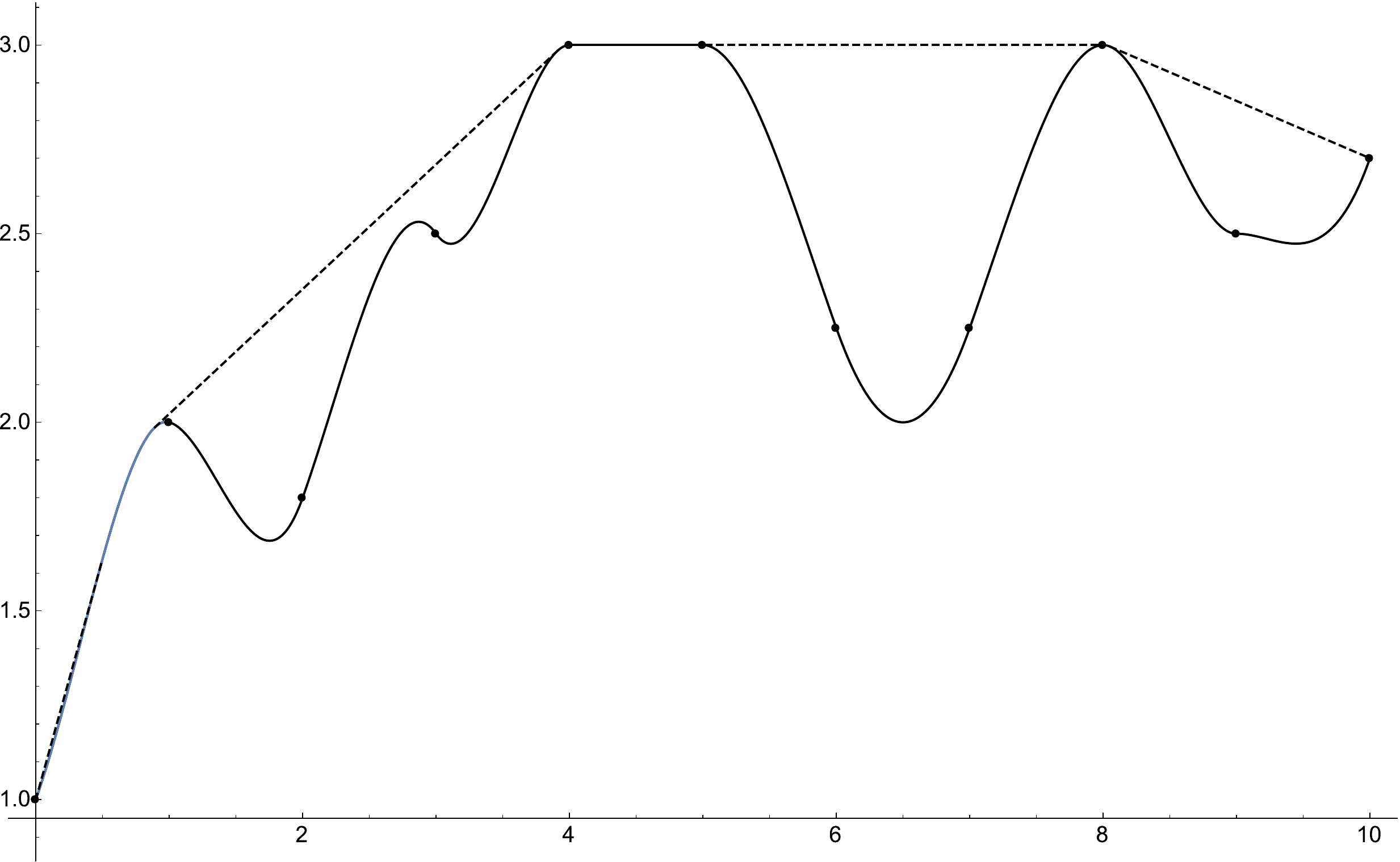}
%{SimpleSplineLCMBlackTicks.eps}
\put(-130, 100){$s$}
\put(-140, 190){$\hat s$}
\caption{The least concave majorant of $s$ is linear interpolation of $s$ from end-points of a component interval and agrees with $s$ elsewhere.}
\label{figure6}
\end{figure}

\subsection*{ Example 2.}

Consider the trimodal density function discussed in \cite{HarKerPicTsy1998}, namely,
\[
f(x) = 0.5 \phi(x-3) + 3 \phi( 10(x - 3.8)) + 2 \phi(10(x-4.2)),
\]
in which 
\[
\phi(x) = \frac{1}{\sqrt{2\pi}} e^{-\frac{x^2}{2} }.
\]
We wish to approximate the least concave majorant of $F(x) = \int_0^x f(y) \,dy$ on $[0,6]$. Now, $\left\|F^{(4)} \right\|_\infty \leq 700$, so to ensure that the clamped cubic spline $S_F$ approximating $F$ on $[0,6]$  satisfies $| f^\circ (x) - (S'_F)^\circ (x) | \leq .001$ on $[0,6]$, we solve the equation $\frac{700}{24}\left\| \varrho \right\|^3 = .001$ to obtain $\left\| \varrho \right\| = .03249$. Dividing $[0,6]$ into $85 > \frac{6}{.03249}$ equal subintervals, we apply the algorithm to identify the component intervals of $Z_{S_f}^C$. The approximation $\int_0^x (\hat S'_f)^\circ$  to $\hat F (x)$ is accurate to within $.003$.

Figure \ref{figure62} shows the graph of $F(y)$ and the approximation to its least concave majorant, $\hat S_F$.

\begin{figure}\label{figure62}
\includegraphics[scale = 0.485]{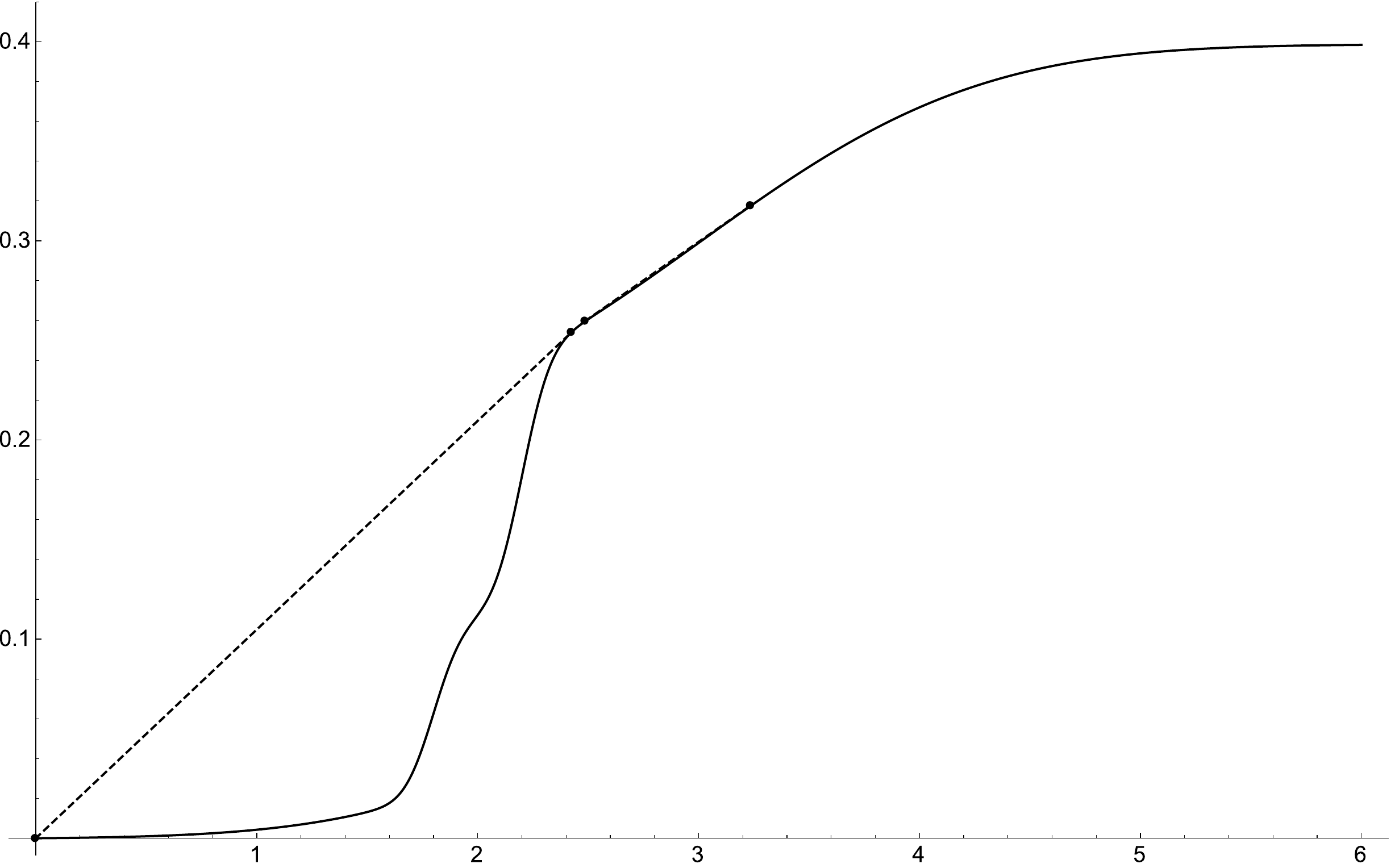}
%{RonExampleTrimodal092013FBlackTicks.eps}
\put(-50,193){$F$}
\put(-260 ,70){$\hat F$}
\caption{The trimodal density function $F$ with its least concave majorant $\hat F$.  The bridge intervals are $(0, 2.42575)$ and $(2.48781,3.23693) $.}
\end{figure}

\newpage

{\small
{\em Authors' addresses}: 
{\em Martin Franc\accent23u}, 
Faculty of Mathematics and Physics, Charles~University, 
Prague, Czech Republic,
 e-mail:~\texttt{martinfrancu@gmail.com};  
{\em Ron Kerman},
Department of Mathematics and Statistics, Brock~University,
St. Catharines, Canada,
 e-mail:~\texttt{rkerman@brocku.ca};
{\em Gord Sinnamon},
Department of Mathematics, University of~Western Ontario,
London, Canada,
e-mail:~\texttt{sinnamon@\allowbreak uwo.ca};
}

\end{document}